\newtheorem{Theorem}{Theorem}
\newtheorem{Proposition}[Theorem]{Proposition}
\newtheorem{Corollary}[Theorem]{Corollary}
\newtheorem{Lemma}[Theorem]{Lemma}
\newtheorem{Conjecture}[Theorem]{Conjecture}
\newenvironment{ackn}{\medskip \noindent \small
{\sl Acknowledgments.}}{\bigskip}
\newcommand{\pg}{{\cal P}(\Gamma)}
\newcommand{\Tr}{{\rm Tr}}
\title{
\Large\sc Orthogonal pairs and mutually unbiased bases}
\author{Alexey Bondal\thanks{ Steklov Institute of Mathematics, Moscow, and Kavli Institute for the Physics and Mathematics of the Universe (WPI), The University of Tokyo, Kashiwa, Chiba 277-8583, Japan,  and HSE Laboratory of algebraic geometry, Moscow, and The Institute for Fundamental Science, Moscow}~ and Ilya Zhdanovskiy\thanks{MIPT, Moscow, and HSE Laboratory of algebraic geometry, Moscow}}
\begin{document}

\maketitle

\section{Introduction}
An {\em orthogonal pair} in a semisimple Lie algebra is a pair of Cartan subalgebras which are orthogonal with respect to the Killing form. Description of orthogonal pairs in a given Lie algebra is an important step in the classification of {\em orthogonal decompositions}, i.e. decompositions of the Lie algebra into the sum of Cartan subalgebras pairwise orthogonal with respect to the Killing form.

Orthogonal decompositions  come up firstly in the theory of integer lattices in the paper by Thompson \cite{Tom}. Then the theory of such decompositions was substentially developed \cite{KT}. The classification problem of orthogonal pairs in $sl(n, {\mathbb C})$ is closely related to the classification of complex Hadamard $n\times n$ matrices \cite{KT}, \cite{BTSW}.

Independently, the study in Quantum Theory brought into light the notion of mutually unbiased bases, objects of constant use in Quantum Information Theory, Quantum Tomography, etc. \cite{DEBZ}, \cite{Ru}. It was revealed that mutually unbiased bases are a unitary version of orthogonal pairs \cite{BTSW}. This makes a link of the subject to various vibrant problems in Mathematical Physics.

One of the reasons why mutually unbiased bases are important in practice is that they provide a crucial mathematical tool that allows to transfer
quantum information with minimal loss of it in the channel. Reliable
protocols in quantum channels are based on a
choice of maximal number of mutually unbiased bases in the relevant vector space of
quantum states of transmitted particles. For instance, protocol BB84, which utilizes 3 such bases in a 2 dimensional
vector space, enables to significantly extend the distance between the source
and the receiver of quantum information. Constructing maximal number of mutually unbiased bases in vector spaces of
higher dimensional is important for producing reliable protocols in quantum channels.

Also, one of the important problems of quantum teleportation is to check the result of
purity of the teleportation by means of Quantum Tomography. This is used in real experiments on teleportation of entangled particles (cf. \cite{LNGGNDVS}). The
Quantum Tomography with minimal error bar is again based on mutually
unbiased bases \cite{BLDG}, \cite{FM}.

Despite of simple definition, the classification of orthogonal pairs is a very hard problem of algebraic geometric flavor. We will consider pairs in Lie algebra $sl(n,{\mathbb C})$. According to the famous Winnie-the-Pooh conjecture \cite{KKU1}, orthogonal decompositions are possible in this algebra when $n$ is a power of prime number only. This suggests the idea that the behavior of the objects under the study strongly depend on the arithmetic properties of the number $n$. For $n=1,2,3$, there is a unique, up to natural symmetries, orthogonal pair. For $n=5$, there are three of them \cite{KKU2}, \cite{No}, while, for $n=4$ (the first non-prime integer), there is a one dimensional family of pairs parameterized by a rational curve.

The first positive integer which is not a power of prime is $n=6$. Winnie-the-Pooh conjecture is open even for this case. Researchers in the quantum information theory have independently come to the unitary version of the Winnie-the-Pooh conjecture, which claims non-existence of $n+1$ mutually unbiased bases in the $n$-dimensional complex space \cite{KKU1} when $n$ is not a power of prime. The case $n=6$ is the subject of problem number 13 in the popular list of problems in Quantum Information Theory \cite{pro}.

In this paper, we outline the proof of existence of a 4-dimensional family of orthogonal pairs in Lie algebra $sl(6,{\mathbb C})$. The existence of such a  family was conjectured by the authors (unpublished). Independently, mathematical physicists came to the conjecture on existence of a 4-dimensional family of pairs of mutually unbiased bases in ${\mathbb C}^6$  \cite{Szo},\cite{MS}. Despite of many efforts the proof of the existence of the family was not available until recently \cite{BZ2}. Our proof is quite involved and requires a lot of algebraic geometry. In this paper, we give a relatively short survey of the main steps of the proof and describe explicit constructions that lead to the existence of the family.

Then, we give an application of the result on the algebraic geometric family of pairs to the study of mutually unbiased bases. We show the existence of a real 4-dimensional dimensional manifold parameterizing pairs of such bases in ${\mathbb C}^6$, thus confirming the conjecture of physicists. The proof is based on construction of a principal homogeneous bundle over the locus ${\cal M}_{\mathbb R}$ parameterizing pairs of mutually unbiased bases.

In \cite{BZ}, we interpreted orthogonal pairs and decompositions as representations of the algebra $B(\Gamma )$ for a suitable choice of graph $\Gamma $ (see section \ref{TLieb}). These algebras are so-called {\em homotopes} over the path algebras of graph $\Gamma$ considered as a topological space. In its turn, path algebras of the graphs are Morita equivalent to the group algebras of fundamental groups of the graphs. This is useful for calculating the moduli space of representations of $B(\Gamma )$.
Orthogonal pairs in $sl(n)$ correspond to representations of algebra $B(\Gamma)$, where $\Gamma$ is the complete bipartite graphs $\Gamma_{n,n}$.


One of the key point of our proof is a hidden geometry of elliptic fibration of a moduli spaces, $X$, of 6 dimensional representations of $B(\Gamma_{3,3})$, where $\Gamma_{3,3}$ is a full bipartite graph of length $(3,3)$. We define 3 functions on $X$ which determine a map $X\to {\cal U}$, where $\cal U$ is a three dimensional affine space. After factorization of $X$ by permutation group $S_3\times S_3$, the fibre is actually isomorphic to (an open affine subset in) two disjoint copies of an elliptic curve. The profit of this map is that the original problem of describing orthogonal pairs in $sl(6,{\mathbb C})$ can be interpreted in terms of 'gluing' four copies of $X$ in such a way that all constructions are basically implemented relatively over ${\cal U}$. The geometry of the elliptic fibration is a powerful tool that eventually allowed us to show the existence of the 4-dimensional family. In particular, we study the interplay of relevant involutions acting on the elliptic fibers. This part is based on heavy use of algebraic geometry. Let us mention important  formula (\ref{identity pq}) which probably needs a more conceptual explanation than just a verification.

If we think about the main steps of the proof in terms of the $6\times 6$ matrix $A$ that conjugates one Cartan subalgebra in the orthogonal pair to the other one ({\em suitable} or {\em generalized Hadamard} matrix), then we first present this matrix in  2 blocks of $3\times 6$ matrices and then decompose each of these $3\times 6$ blocks into two $3\times 3$ blocks.

Equivalently, the first decomposition is about decomposing the set of vertices in one of the rows of the full bipartite graph $\Gamma_{6,6}$ into two disjoint subsets with 3 elements in each. This has a geometric interpretation in the statement of theorem \ref{x66} that the higher dimensional components of moduli space $X(6,6)$ of 6-dimensional representations of algebra $B_{6,6}$, a quotient of algebra $B(\Gamma_{6,6})$, are birationally identified with fiber product of two copies of representation moduli spaces $X(3,6)$ for the algebra $B_{3,6}$, which is a quotient of $B(\Gamma_{3,6})$.

Further, the vertices in the row of length 6 in the full bipartite graph $\Gamma_{3,6}$ are decomposed into two disjoint subsets with 3 elements in each. This boils down to the decomposition of the unique 4-dimensional component of moduli space $X(3,6)$ of representations for $B_{3,6}$ into a fiber product of two copies of moduli $X=X_{3,3}$ for representations of algebra $B(\Gamma_{3,3})$ as in theorem \ref{irrx}. In the text, we do this in the reverse order: first decompose $X=X_{3,6}$ and then $X=X_{6,6}$.

The fiber products are taken over moduli spaces of representations for algebras $A(n)$, $n=3,6$ (see \ref{Morita}).
We construct Morita equivalence of algebra $A(n)$ with the deformed preprojective algebra, for arbitrary $n$. The deformed preprojective algebras are intensively studied by many authors (cf. \cite{GG}, \cite{CB1}). For our purposes, this Morita equivalence is important, because we can use a result of Crawley-Boewey \cite{CB2} to conclude about irreducibility of representation moduli space $Y(n)$ for $A(n)$.
The symplectic geometry of $Y(n)$ is a part of the symplectic approach to the study of pairs of mutually unbiased bases discussed in \cite{BZ1}, where its relation via mirror symmetry to the Birkhoff-Von Neumann polytope of doubly stochastic matrices was discovered.


We construct an involution on the quotient space $X(3, 6)/S_3$. The crucial step in our argument is to show that this involution agrees with a map $X(3, 6)/S_3\to Y(6)$ and an involution $\sigma '$ on $Y(6)$. The proof of this fact (Proposition \ref{gpsg}) uses the property of automorphisms on varieties of general type to be of finite order.
We use the point $x_0\in X(6,6)$ corresponding to the standard pair of Cartan subalgebras, which has a regular behavior with respect to our constructions, to prove the existence of a 4-dimensional component that contains this point.

Then, we shift our attention to mutually unbiased bases. We compare space ${\cal M}_{\mathbb R}$ parameterizing the pairs of mutually unbiased bases with the space ${\cal M}^{\theta}$ parameterizing stable points of an anti-holomorphic involution $\theta$ acting on the moduli space of orthogonal pairs. We show that ${\cal M}_{\mathbb R}$ is open in ${\cal M}^{\theta}$.
The proof is based on considering a principal homogeneous bundle over ${\cal M}^{\theta}$ and characterizing its restriction to ${\cal M}_{\mathbb R}$ by means of the Sylvester theorem characterizing positive Hermitian matrices. This describes, in principal, the strict polynomial inequalities that define ${\cal M}_{\mathbb R}$ inside ${\cal M}^{\theta}$. Since point $x_0$ is in ${\cal M}_{\mathbb R}$ and real dimension of ${\cal M}_{\mathbb R}$ equals the complex dimension of the corresponding component in $X(6,6)$, we conclude with the existence of a real 4-dimensional family of pairs of mutually unbiased bases.

\bigskip
\begin{ackn}This work was done during authors visit to Kavli IPMU and was supported by World Premier International Research Center Initiative (WPI Initiative), MEXT, Japan. The reported study was partially supported by RFBR, research projects 13-01-00234, ╣14-01-00416 and ╣ 15-51-50045. The article was prepared within the framework of a subsidy granted to the HSE by the Government of the Russian Federation for the implementation of the Global Competitiveness Program.
\end{ackn}

\section{Algebraic preliminaries}\label{algebraic}
\subsection{Orthogonal Cartan subalgebras}\label{cartan}

Consider  a simple Lie algebra $L$ over an algebraically closed field of characteristic zero.
Let $K$ be the Killing form on $L$.
In 1960, J.G.Thompson, in course of constructing integer quadratic lattices with interesting properties, introduced the following definitions.

{\bf Definition.} Two Cartan subalgebras $H_1$ and $H_2$
in $L$ are said to be {\it orthogonal} if $K(h_1,h_2) = 0$ for all $h_1 \in H_1,
h_2 \in H_2$.

{\bf Definition.} Decomposition of $L$ into the direct sum
of Cartan subalgebras $L = \oplus^{h+1}_{i=1}
H_i$ is said to be {\it orthogonal} if
$H_i$ is orthogonal to $H_j$, for all $i \ne j$.

Intensive study of orthogonal decompositions has been undertaken since then (see the book \cite{KT} and references therein).
For Lie algebra $sl(n)$, A.I. Kostrikin et al. arrived to the following conjecture, called {\it Winnie-the-Pooh Conjecture} (cf. {\em ibid.} where, in particular, the name of the conjecture is explained by a wordplay in the
Milne's book in Russian translation).

\begin{Conjecture}
Lie algebra $sl(n)$ has an orthogonal decomposition if and only if $n = p^m$, for a prime number $p$.
\end{Conjecture}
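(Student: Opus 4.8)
The statement splits into two implications of very different difficulty, and my plan treats them separately. The sufficiency direction, that $n=p^m$ forces the existence of an orthogonal decomposition, is constructive and I would prove it outright; the necessity direction, that existence forces $n$ to be a prime power, is the genuinely hard content and, as this paper itself notes, remains open even for $n=6$. So the honest proposal is a complete argument for sufficiency together with a strategy, and an identification of the real obstruction, for necessity.

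For sufficiency, the plan is to use the finite-field Weyl construction. Write $q=p^m$ and identify a basis $(e_x)_{x\in\mathbb{F}_q}$ of $\CC^q$ with the field $\mathbb{F}_q$. I would fix a nontrivial additive character $\chi\colon\mathbb{F}_q\to\CC^*$ and set $X_a e_x=e_{x+a}$, $Z_b e_x=\chi(bx)e_x$, and $W_{a,b}=X_aZ_b$ for $(a,b)\in\mathbb{F}_q\times\mathbb{F}_q$. A direct check gives the Weyl relation $W_{a,b}W_{c,d}=\chi(bc-ad)\,W_{c,d}W_{a,b}$, shows that each $W_{a,b}$ with $(a,b)\neq 0$ is traceless and semisimple (indeed $W_{a,b}^{p}$ is scalar, so its minimal polynomial has distinct roots), and yields $\Tr(W_{a,b}W_{c,d})=0$ unless $(c,d)=-(a,b)$; hence $\{W_{a,b}\}_{(a,b)\neq 0}$ is a basis of $sl(q)$. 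Next I would partition the $q^2-1$ nonzero points of $\mathbb{F}_q\times\mathbb{F}_q$ into the $q+1$ lines through the origin. On a single line the form $bc-ad$ vanishes, so the $q-1$ operators indexed by a line $\ell$ pairwise commute; being $q-1$ independent commuting semisimple elements, they span a Cartan subalgebra $H_\ell$. Finally, two points on distinct lines $\ell\neq\ell'$ are never negatives of one another, so the trace of the corresponding product vanishes and $H_\ell$ is Killing-orthogonal to $H_{\ell'}$ (recall $K$ is proportional to the trace form on $sl(q)$). Since the lines partition the nonzero points, $sl(q)=\bigoplus_\ell H_\ell$ is the desired orthogonal decomposition. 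The argument is tied to prime powers precisely because $\mathbb{F}_q\times\mathbb{F}_q$ is a $2$-dimensional vector space over a field exactly when $q=p^m$, which is what produces a clean pencil of $q+1$ pairwise-disjoint lines covering every nonzero vector once.

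The necessity direction is the main obstacle, and here I do not expect a short argument. The approach I would pursue is the one underlying this paper: reinterpret an orthogonal pair as a complex Hadamard matrix $A$ conjugating one Cartan to the other, equivalently as a representation of the homotope algebra $B(\Gamma_{n,n})$, and try to extract arithmetic constraints on $n$ from the geometry of the representation moduli space. A full decomposition corresponds to $n+1$ pairwise-orthogonal Cartans, i.e. to a maximal family of mutually orthogonal Hadamard matrices, so one must show that the moduli space of $B(\Gamma_{n,n})$-representations carries no such configuration unless $n$ is a prime power. The hard part is precisely that no uniform mechanism is known to force the required contradiction for composite non-prime-power $n$: the defining conditions form a large system of polynomial (Hadamard) equations whose solution set is poorly understood, and the smallest open case $n=6$ already resists direct attack. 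I would therefore expect necessity to demand, case by case, a detailed study of this moduli space analogous to the $n=6$ analysis sketched here, and I would flag the absence of a conceptual obstruction valid for all non-prime-power $n$ as the central open difficulty.
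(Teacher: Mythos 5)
The statement you were asked to prove is the Winnie-the-Pooh conjecture, which this paper only \emph{states}: it gives no proof, and indeed explicitly records that even the single case $n=6$ of the necessity direction is open (and the paper's own results concern orthogonal \emph{pairs} in $sl(6)$, which neither prove nor refute the conjecture about full decompositions). So there is no proof in the paper to compare yours against, and your decision to split the biconditional and be explicit about which half is provable is exactly right. Your sufficiency argument is correct and is the classical construction from the orthogonal-decompositions literature (it appears in Kostrikin--Tiep, and is the Lie-algebra avatar of the Weyl--Heisenberg/symplectic-spread construction of $q+1$ mutually unbiased bases in prime power dimension): the commutation relation $W_{a,b}W_{c,d}=\chi(bc-ad)W_{c,d}W_{a,b}$, the trace orthogonality $\Tr(W_{a,b}W_{c,d})=0$ unless $(c,d)=-(a,b)$, the semisimplicity of each $W_{a,b}$ via $W_{a,b}^{p}$ being a nonzero scalar, and the partition of $\mathbb{F}_q^2\setminus\{0\}$ into the $q+1$ lines of the pencil all check out, and the span of the $q-1$ commuting semisimple elements on a line is a toral subalgebra of dimension equal to the rank of $sl(q)$, hence a Cartan subalgebra. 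Two very minor points worth tightening: you should note that negatives of a point lie on the same line (which you implicitly use for cross-line trace orthogonality), and that the Killing form of $sl(q)$ is $2q$ times the trace form, so trace orthogonality does give Killing orthogonality.

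The gap is the one you yourself name: the necessity direction is not proved, and cannot be with current knowledge --- it is precisely the open content of the conjecture. Your proposed strategy for it (Hadamard matrices, representations of $B(\Gamma_{n,n})$, and the geometry of their moduli) is the framework of this paper, but be aware that the paper's main theorem points, if anything, in the opposite emotional direction: it produces a large ($4$-dimensional) family of orthogonal \emph{pairs} in $sl(6)$, so any proof of non-existence of a full decomposition for $n=6$ must rule out extending each of these pairs to $7$ pairwise orthogonal Cartan subalgebras, not exploit scarcity of pairs. As a submission for the stated Conjecture your text is therefore an honest partial result, not a proof; it would be accurate to retitle your sufficiency half as a Proposition and leave the necessity half explicitly conjectural.
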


The conjecture has proved to be notoriously difficult. Even the non-existence of an orthogonal decomposition for $sl(6)$, when $n=6$, i.e. the first number which is not a prime power, is still open. Also it is important to find the maximal number of pairwise orthogonal Cartan subalgebras in $sl(n)$ for any given $n$ as well as to classify them up to obvious symmetries.

We recall an interpretation of the problem in terms of systems of minimal projectors and its relation to representation theory of Temperley-Lieb algebras

Let $sl(V)$ be the Lie algebra of traceless operators in $V$. Killing form is given by the trace of product of operators. A Cartan subalgebra $H$ in $V$ defines a unique maximal set of minimal orthogonal projectors in $V$. Indeed, $H$ can be extended to the Cartan subalgebra $H'$ in $gl(V)$ spanned by $H$ and the identity operator $E$.
Rank 1 projectors in $H'$
are pairwise orthogonal and comprise the required set. We say that these projectors are {\it associated} to $H$.

If $p$ is a minimal projector in $H'$, then trace of $p$ is 1, hence, $p-\frac 1nE$ is in $H$. If projectors $p$ and $q$ are associated to orthogonal Cartan subalgebras, then
$$
{\rm Tr}(p-\frac 1nE)(q-\frac 1nE)=0,
$$
which is equivalent to
\begin{equation}\label{algunbias}
{\rm Tr}pq={\frac1n}.
\end{equation}
We say that a pair of minimal projectors is {\em algebraically unbiased} if it satisfies this equation.

Therefore, an orthogonal pair of Cartan subalgebras is in one-to-one correspondence with two maximal sets of minimal orthogonal projectors such that every pair of projectors from different sets is algebraically unbiased . Similarly, orthogonal decompositions of $sl(n)$ correspond to $n+1$ pairwise algebraically unbiased sets of minimal orthogonal projectors. In the analysis of the problem, it is worthwhile to consider not only maximal sets of orthogonal projectors, but also study pairwise unbiasedness for various subsets of maximal sets.
This suggests to consider the representation theory of reduced Temperley-Lieb algebras of arbitrary graphs with no loop, which we describe in the next section.


\subsection{Reduced Temperley-Lieb algebras}\label{TLieb}

Let $\Gamma$ be a connected simply laced graph with no loop (i.e. no edge with coinciding ends).  Denote by $V(\Gamma)$ and $E(\Gamma)$ the sets
of vertices and edges of the graph.
Let $F$ be a field of characteristic zero.

Fix $r \in F^*$.
We define
reduced Temperley-Lieb
algebra $B_r(\Gamma)$ as a unital algebra over $F$ with generators $x_i$ numbered by
vertices $i\in V(\Gamma)$. They subject relations:
\begin{itemize}
\item{$x^2_i = x_i$, for every $i$ in $V(\Gamma )$,}

\item{$x_ix_jx_i = rx_i$, $x_jx_ix_j = rx_j$, if there is an edge $(i,j)$ in $\Gamma$,}

\item{$x_ix_j = x_jx_i = 0$, if there is no edge $(i,j)$ in $\Gamma$.}
\end{itemize}

If we replace the last relation by $x_ix_j = x_jx_i$ (under the same condition on $(i,j)$), we get the standard Temperley-Lieb algebra $TL_r(\Gamma)$.
It follows that $B_r(\Gamma)$ is a quotient of Temperley-Lieb algebra $TL_r(\Gamma)$ of graph $\Gamma$. In its turn Temperley-Lieb algebra is a quotient of Hecke algebra of the graph, hence algebra $B_r(\Gamma)$ is a special quotient of Hecke algebra (see \cite{BZ}). Thus representation theory of $B_r(\Gamma)$ is a part of representation theory of Hecke algebras of graphs. Note that the representation theory of $B_r(\Gamma)$ is difficult, and the measure of difficulty is the rank of the first homology of the graph as a topological space.
Clearly, any automorphism of graph $\Gamma$ induces an automorphism of algebra $B_r(\Gamma)$.

The condition on two minimal projectors to be algebraically unbiased (\ref{algunbias}) can be reformulated as  algebraic relations:
$$
pqp=\frac 1n p, \ \ qpq=\frac 1n q .
$$
It follows from section \ref{cartan} that a pair of orthogonal Cartan subalgebras in Lie algebra $sl(n)$ defines a representation of $B_{\frac 1n}(\Gamma_{n,n})$ where $\Gamma_{n,n}$ is a full bipartite graph with $n$ vertices in both rows, and every generator $x_i$ is represented by a rank 1 projector. Generators in one row correspond to the system of orthogonal projectors related to one Cartan subalgebra. Since the sum of all minimal projectors in one system is the identity matrix, the representation descends to a representation of the algebra
$$
B_{n,n}=B_{\frac 1n}(\Gamma_{n,n})/(\sum p_i-1, \sum q_j -1),
$$ where $p_i$'s are idempotents corresponding to one row and $q_j$'s to the other one. Representations of $B_{n,n}$ where every generating idempotent is presented by a minimal projector are in one-to-one correspondence with orthogonal pairs of Cartan subalgebras in $sl(n)$. Moduli of 6-dimensional representations for $B_{6,6}$ is the central object of this paper.

It is instructive to think about $B_r(\Gamma)$ as a {\em homotope} of the path algebra of the quiver (see below).

\subsection{The path algebra of a graph}
Let again $\Gamma$ be a simply-laced graph with no loop. Consider it as a topological space. Let $\pg$ be the Poincare
groupoid of graph $\Gamma$, i.e. a category with objects vertices of the graph
and morphisms homotopic classes of paths. Composition of morphisms is given by concatenation of paths.

Denote by $F\Gamma $ the
algebra over $F $
with a free $F$-basis numbered by morphisms in
$\pg$ and multiplication induced by concatenation of paths (when it makes sense, and multiplication is zero when it does not). Let $e_i$ be the element of $F\Gamma$ which is the constant path at vertex $i$. Any oriented edge $(ij)$ can be interpreted as
a morphism in $\pg$, hence it gives an element $l_{ij}$ in $F\Gamma$. These are the generators.
The defining relations are:
\begin{itemize}
\item{$e_i e_j = \delta_{ij} e_i,\ \ e_i l_{jk} = \delta_{ij} l_{ik},\ \ l_{jk} e_i = \delta_{ki} l_{jk}$;}
\item{$l_{ij} l_{ji} = e_i,\ \ l_{ji}l_{ij} = e_j,\ \ l_{ij}l_{km} = 0$, if $j \ne k$.}
\end{itemize}

We consider $F\Gamma$ as an algebra with unit:
$$
1=\sum_{i\in V(\Gamma )}e_i.
$$

Let $\Gamma$ be in addition a connected graph. Then the category of representations for $F\Gamma$ and
for the fundamental group of the graph are equivalent.
To see this, fix $t \in V(\Gamma)$. Denote by $F[\pi(\Gamma,t)]$
the group algebra of the fundamental group $\pi(\Gamma,t)$. Consider
projective $F\Gamma$ - module $P_t = F\Gamma e_t$. Clearly, $P_t$ is a $
F\Gamma$ - $F[\pi(\Gamma,t)]$ - bimodule. Note that $P_t$ are isomorphic as left $F \Gamma$-modules for all choices of vertex $t$. Indeed, the right multiplication by an element corresponding to a path starting at $t_1$ and ending at $t_2$ give an isomorphism $P_{t_1}\simeq P_{t_2}$.

Bimodule $P_t$ induces a Morita equivalence between $F\Gamma$ and $F[\pi(\Gamma,t)]$.
Thus, the categories $F\Gamma - {\rm mod}$ and $F[\pi(\Gamma,t)] - {\rm mod}$ are
equivalent. Moreover, algebra $F\Gamma$ is isomorphic to the matrix algebra over $F[\pi(\Gamma,t)]$, with the size of (square) matrices equal to $|V(\Gamma )|$.

Mutually inverse functors that induce an equivalence between categories $F\Gamma - {\rm mod}$ and $F[\pi(\Gamma,t)] - {\rm mod}$ are:
\begin{equation}
\label{functors} V \mapsto P_t \otimes_{F[\pi(\Gamma,t)]} V,\ \ W \mapsto
{\rm Hom}_{F\Gamma}(P_t, W).
\end{equation}



In order to define an isomorphism $F\Gamma\to {\rm Mat}_n(F[\pi(\Gamma,t)])$, fix a system of paths $\{\gamma_i\}$ connecting the vertex $t$ with every vertex $i$. For any element $\pi\in F[\pi(\Gamma,t)]$ consider an element  $\gamma^{-1}_i\pi\gamma_j$ in $F\Gamma$. The homomorphism is defined be the assignment:
$$
\gamma^{-1}_i\pi\gamma_j\mapsto \pi\cdot E_{ij},
$$
where $E_{ij}$ stands for the elementary matrix with the only nontrivial entry $1$ at $(ij)$-th place. This is clearly a well-defined ring isomorphism.

The fundamental group $\pi(\Gamma,t)$ is free with the number of generators equal to the rank of the first homology of the graph regarded as a topological space.



\subsection{Homotopes and reduced Temperley-Lieb algebras}\label{hmtop}

Recall the definition of homotope. Given a unital algebra $A$ and an element $\Delta \in A$, one can define a new algebra structure on $A$ by the multiplication:
$$
a\circ b=a\Delta b.
$$
The new algebra might not have a unit. For this reason we adjoin a unit to it and denote the new algebra by $B$:
$$
B=F\cdot 1_B\oplus B^+,
$$
where $B^+$ is the two-sided ideal in $B$ which is $A$ as a vector space with the new multiplicaion.
We say that $B$ is the {\em homotope} over $A$ with respect to $\Delta$.

Algebraic properties of homotopes and their general representation theory is available in \cite{BZ}.

Consider again a simply laced graph $\Gamma$ with no loop. Fix $r\in F^*$. The {\em (generalized) Laplace operator of a graph $\Gamma$} is an element $\Delta$ in the algebra $F\Gamma$ of Poincare groupoid of the graph:
\begin{equation}\label{deltaformula}
\Delta =1+ \sqrt{r}\sum l_{ij},
\end{equation}
where the sum is taken over all oriented edges.

Consider algebra $F \Gamma_{\Delta}=F\cdot1\oplus F \Gamma_{\Delta}^+$, the unital homotope over $F\Gamma$ with respect to element $\Delta$. Note that the algebra is independent of the choice of the of the square root of $r$.
Denote by $x_i$'s the elements in $F{\Gamma}_{\Delta}^+$ that correspond to $e_i$'s in $F{\Gamma }$.
The following theorem realizes $B_r(\Gamma)$ as a unital homotope over the Poincare groupoid $F\Gamma$.
\begin{Theorem}\label{bgammaviagr}\cite{BZ}
There is a unique isomorphism of algebras and maximal ideals in them:
\begin{equation}
B_r(\Gamma ) \cong {F{\Gamma}_{\Delta}},\ \ \ B^{+}_r(\Gamma ) \cong F{\Gamma}_{\Delta}^+,
\end{equation}
that takes $x_i$ into $e_i$.
\end{Theorem}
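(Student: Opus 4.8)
The plan is to construct the map in the stated direction, $\phi\colon B_r(\Gamma)\to F\Gamma_\Delta$, by the assignment $1\mapsto 1$ and $x_i\mapsto e_i$, and then to prove that it is a bijective homomorphism. Since $B_r(\Gamma)$ is presented by the generators $x_i$ and the three families of relations, the first task is to verify that the elements $e_i\in F\Gamma_\Delta^+$ satisfy these relations under the homotope product $a\circ b=a\Delta b$. Using $\Delta=1+\sqrt r\sum l_{ab}$ together with the groupoid identities $e_il_{ab}=\delta_{ia}l_{ab}$, $l_{ab}e_j=\delta_{bj}l_{ab}$, $l_{ij}l_{ji}=e_i$, $l_{ij}l_{km}=0$ for $j\neq k$, and the absence of loops (which kills all diagonal terms), one computes $e_i\circ e_i=e_i\Delta e_i=e_i$, so $x_i^2=x_i$ holds. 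For an edge $(i,j)$ the key intermediate identity is $e_i\circ e_j=e_i\Delta e_j=\sqrt r\,l_{ij}$, whence $e_i\circ e_j\circ e_i=\sqrt r\,l_{ij}\,\Delta\,e_i=\sqrt r(\sqrt r\,l_{ij}l_{ji}e_i)=r\,e_i$, matching $x_ix_jx_i=rx_i$. Finally, if $(i,j)$ is not an edge then $e_ie_j=0$ and every term $e_il_{ab}e_j$ vanishes, so $e_i\circ e_j=0$. This makes $\phi$ a well-defined homomorphism, and since $B_r(\Gamma)$ is generated by $1$ and the $x_i$, any homomorphism with $x_i\mapsto e_i$ is unique.

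Surjectivity follows from the identity $e_i\circ e_j=\sqrt r\,l_{ij}$: each edge generator $l_{ij}$ of $F\Gamma$ lies in the image of $\phi$, as does each $e_i$. I would then show by induction on length that, for a reduced edge-path $i_0,i_1,\dots,i_m$, the $\circ$-product $e_{i_0}\circ\cdots\circ e_{i_m}$ equals $r^{m/2}$ times the groupoid morphism $l_{i_0i_1}l_{i_1i_2}\cdots l_{i_{m-1}i_m}$; the inductive step is a one-line repetition of the computation above, the only surviving term being $\sqrt r\,L\,l_{i_{m-1}i_m}$. Since the morphisms of $\pg$ span $F\Gamma=F\Gamma_\Delta^+$, the image of $\phi$ contains $F\Gamma_\Delta^+$, and as $\phi(1)=1$ the map is onto.

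The real work is injectivity, for which I would exhibit a spanning set of $B_r(\Gamma)$ that $\phi$ carries onto a basis of $F\Gamma_\Delta$. Using $x_i^2=x_i$ to merge repeated letters, $x_ix_j=0$ to discard non-adjacent ones, and $x_ix_jx_i=rx_i$ to remove backtracks (each move strictly shortens the monomial, up to a power of $r$), every monomial in the $x_i$ reduces to a scalar multiple of a \emph{reduced} word $x_{i_0}x_{i_1}\cdots x_{i_m}$, i.e.\ one indexed by a reduced edge-path. These reduced words, together with $1$, span $B_r(\Gamma)$. By the induction of the previous step, $\phi$ sends such a word to $r^{m/2}$ times the corresponding morphism of $\pg$. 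I then invoke the topological fact that in a graph every path is homotopic rel endpoints to a unique reduced edge-path, so distinct reduced words map to distinct basis elements of the groupoid algebra $F\Gamma$ with nonzero coefficients. Hence $\phi$ carries the spanning set to a linearly independent family; a spanning set mapping to a linearly independent set is itself a basis and is mapped bijectively. Therefore $\phi$ is injective, hence an isomorphism, and it restricts to $B_r^+(\Gamma)\xrightarrow{\sim}F\Gamma_\Delta^+$, these being the augmentation ideals complementary to $F\cdot 1$ on each side.

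I expect the delicate point to be injectivity rather than the relation check: one must ensure that the reduction of monomials always terminates at reduced words and, more importantly, that these reduced words are genuinely independent in $B_r(\Gamma)$ — which is exactly what the comparison with the groupoid basis of $F\Gamma$ secures, converting a potentially awkward rewriting-confluence question into the clean classification of homotopy classes of paths in a graph. The auxiliary $\sqrt r$ entering $\Delta$ causes no trouble: rescaling each $l_{ij}$ absorbs the choice of square root, which is precisely why $F\Gamma_\Delta$ is independent of it.
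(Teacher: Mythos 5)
Your argument is correct; note that the paper itself gives no proof of Theorem \ref{bgammaviagr}, deferring to \cite{BZ}, and your route is the natural one: check the three defining relations for the $e_i$ under $a\circ b=a\Delta b$ (using the absence of loops to kill diagonal terms), then send the spanning set of reduced words of $B_r(\Gamma)$ to nonzero multiples of the basis of $F\Gamma$ given by homotopy classes of paths, i.e.\ reduced edge-paths, which settles injectivity without any rewriting-confluence analysis. The only caveat is inherited from the paper's own definition of $\Delta$ rather than from your proof: the identity $e_i\circ e_j=\sqrt r\,l_{ij}$ presupposes $\sqrt r\in F$ (or a rescaling of the generators $l_{ij}$), which is precisely the remark that $F\Gamma_\Delta$ is independent of the choice of square root.
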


This theorem allows us to relate moduli spaces of representations of $B_r(\Gamma )$ with the moduli spaces of the path algebra of the graph. Since the latter algebra is Morita equivalent to the fundamental group of the graph, the link to the representation theory of the free group is implied.

\subsection{Algebra A(n) and Morita equivalence}\label{Morita}

Let us define deformed preprojective algebra $\Pi_{\vec{\lambda}}(Q)$ of a free-loop quiver $Q$. Denote by $Q_0$ and $Q_1$ the sets of vertices and arrows of $Q$ respectively.
Let us construct a {\it double} quiver $Q^d$, that is to each arrow $a \in Q_1$ we add an opposite arrow $a^* \in Q^d_1$. Define commutator $c$ as element $\sum_{a \in Q_1}[a,a^*] \in FQ^d$. For the vector $\vec{\lambda} = (\lambda_1,...,\lambda_m) \in F^m, m = |Q_0|$, we define deformed preprojective algebra as follows:
\begin{equation}
\Pi_{\vec{\lambda}}(Q) = FQ^d/\langle c - \sum^k_{i=1}\lambda_i e_i \rangle
\end{equation}
Fix $r_i \in F^*, i = 1,...,n$.
Consider star quiver ${\cal Q}$ with one central vertex and $n$ vertices at the boundary.
The central vertex is connected with every vertex on the boundary by one out-bound arrow.
Let vector $\vec{\lambda}$ be $(-r_1,...,-r_n,1), \sum^n_{i=1}r_i = k$, where $k \in {\mathbb N}$, and $-r_i, i = 1,...,n$, corresponds to the vertices on the boundary and $1$ corresponds to the central vertex.

Consider algebra $A(n)$ with generators $P,q_1,...,q_n$ and relations:
\begin{equation}
P^2 = P, q^2_i = q_i, q_iPq_i = r_i q_i, \sum^n_{i=1}q_i = 1.
\end{equation}
\begin{Proposition}
\label{morita}
Algebra A(n) is Morita equivalent to the deformed preprojective algebra $\Pi_{\vec{\lambda}}({\cal Q})$.
\end{Proposition}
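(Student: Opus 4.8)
The plan is to realize $A(n)$ as the corner algebra $f\Pi f$ for a suitable full idempotent $f$ in $\Pi := \Pi_{\vec{\lambda}}({\cal Q})$, and then to invoke the standard fact that a full idempotent induces a Morita equivalence between a ring and the corresponding corner algebra. First I would write $\Pi$ out explicitly. The double quiver ${\cal Q}^d$ has the central vertex $0$ and boundary vertices $1,\dots,n$, with arrows $a_i:0\to i$ and their doubles $a_i^*:i\to 0$. Decomposing the defining relation $c=\sum_i\lambda_i e_i$ against the vertex idempotents, the boundary components read $a_ia_i^*=-r_ie_i$ (each loop at a boundary vertex collapses to a scalar), while the central component reads $\sum_{i=1}^n a_i^*a_i=-e_0$ (the signs depend on the composition and orientation conventions and are immaterial for what follows).

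Set $f=e_1+\dots+e_n$, the sum of the boundary idempotents. I would first check that $f$ is full: since $a_i^*a_i=(e_0a_i^*f)(fa_ie_0)\in\Pi f\Pi$, the central relation gives $e_0=-\sum_i a_i^*a_i\in\Pi f\Pi$, so $1=f+e_0\in\Pi f\Pi$ and hence $\Pi f\Pi=\Pi$. This reduces the problem to identifying $f\Pi f$ with $A(n)$.

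Next I would introduce the candidate isomorphism $\phi:A(n)\to f\Pi f$ defined by $q_i\mapsto e_i$ and $P\mapsto -bb^*$, where $b=\sum_i a_i\in f\Pi e_0$ and $b^*=\sum_i a_i^*\in e_0\Pi f$. The central relation gives $b^*b=\sum_i a_i^*a_i=-e_0$, so $(-bb^*)^2=b(b^*b)b^*=-bb^*$ and $\phi(P)$ is idempotent; moreover $e_i(-bb^*)e_i=-a_ia_i^*=r_ie_i$, so $\phi(q_i)\phi(P)\phi(q_i)=r_i\phi(q_i)$. As the $e_i$ are orthogonal idempotents summing to $f$, all defining relations of $A(n)$ are respected and $\phi$ is a well-defined homomorphism. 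For surjectivity I would note that every path between boundary vertices factors through $0$, hence is a product of the hooks $u_{ij}:=a_ia_j^*=-\phi(q_iPq_j)$; thus the $u_{ij}$ together with the $e_i$ generate $f\Pi f$ and all lie in the image of $\phi$.

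The \textbf{main obstacle} is injectivity of $\phi$, i.e. showing that the preprojective relation produces exactly the relations of $A(n)$ and nothing more. I would handle this by matching linear bases. On the $A(n)$ side, using $\sum q_j=1$ and $q_iPq_i=r_iq_i$, the alternating monomials $q_{i_0}Pq_{i_1}P\cdots Pq_{i_m}$ with consecutive indices distinct span $A(n)$; under $\phi$ these map to the reduced through-the-center paths $a_{i_0}a_{i_1}^*a_{i_1}\cdots a_{i_m}^*$ in $f\Pi f$, the relation $a_ia_i^*=-r_ie_i$ being precisely what forbids consecutive repetitions. By the basis theorem for the deformed preprojective algebra of this star quiver, such reduced paths are linearly independent in $\Pi$; hence the spanning set of $A(n)$ is carried to a linearly independent set, forcing $\phi$ to be injective, and therefore an isomorphism. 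Combined with the fullness of $f$, this yields the asserted Morita equivalence. The delicate point, where I expect to rely on the structural theory of preprojective algebras, is exactly this linear independence, equivalently the assertion that no collapses occur beyond those already encoded in $q_iPq_i=r_iq_i$ and $\sum q_i=1$.
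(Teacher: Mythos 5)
The paper states Proposition \ref{morita} without any proof (it is a survey; the argument is deferred to the authors' preprints), so there is no in-text proof to compare against. Your overall strategy --- exhibit $f=e_1+\dots+e_n$ as a full idempotent of $\Pi=\Pi_{\vec{\lambda}}({\cal Q})$ and identify the corner algebra $f\Pi f$ with $A(n)$ --- is the natural and standard route, and your verifications of fullness, of the relations of $A(n)$ for the elements $e_i$ and $-bb^*$, and of surjectivity of $\phi$ are all correct.

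The gap is exactly where you flagged it, and it is a genuine one: the set of ``reduced through-the-center paths'' $a_{i_0}a_{i_1}^*a_{i_1}\cdots a_{i_m}^*$ with consecutive indices distinct is \emph{not} linearly independent in $\Pi$, so the basis theorem you invoke is false as stated. The culprit is the central component of the preprojective relation, $\sum_k a_k^*a_k=-e_0$, which you used to check $\phi(P)^2=\phi(P)$ but did not feed into the reduction of paths: sandwiching it between $a_i$ and $a_j^*$ and collapsing the two non-reduced terms $k=i$ and $k=j$ via $a_ka_k^*=-r_ke_k$ yields $\sum_{k\ne i,j} a_ia_k^*a_ka_j^* = (r_i+r_j-1)\,a_ia_j^*$, a nontrivial dependency among your ``reduced'' paths. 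The mirror relation $\sum_{k\ne i,j}q_iPq_kPq_j=(1-r_i-r_j)\,q_iPq_j$, obtained from $P=P^2=P(\sum_kq_k)P$, shows that your spanning set of $A(n)$ is consistently not a basis either; so the two sides remain compatible, but the ``spanning set maps to an independent set'' mechanism is unavailable. To repair the argument you need an actual normal form --- for instance, use the central relation to eliminate one fixed index from all intermediate positions of the monomials, on both sides simultaneously --- and then prove linear independence of the resulting monomials in $\Pi$ by a real structural input, such as the Crawley-Boevey--Holland filtration of $\Pi_{\vec{\lambda}}$ whose associated graded is the undeformed preprojective algebra of the star, rather than by the nonexistent ``all reduced paths are independent'' statement. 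A secondary point worth a sentence in any write-up: the presentation of $A(n)$ given in the paper does not list $q_iq_j=0$ for $i\ne j$, whereas $e_ie_j=0$ in $\Pi$; you should either derive orthogonality of the $q_i$ or note that it is implicit in the paper's realization of $A(n)$ as a subalgebra of $B_{n,n}$, since without it injectivity of $\phi$ would actually fail.
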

Denote by $Y(n)$ the GIT moduli space of $n$-dimensional $A(n)$-representations where $P$ is presented by a projector of rank $k$ and idempotents $q_j$ are represented by projectors of rank 1. The above proposition allows us to apply results of Crawley-Boewey \cite{CB1}, \cite{CB2}. By checking his assumptions for the star quiver, we get that variety $Y(n)$ is irreducible and has dimension $2(n-k-1)(k-1)$.

\subsection{Coproducts of algebras and moduli of representations}
Consider the quotient algebra
$$
B_{k, n}=B_{\frac 1n}(\Gamma_{k, n})/(\sum q_j-1),
$$
where $q_j$'s are idempotents corresponding to the vertices of the row of length $n$ in the bipartite graph $\Gamma_{k, n}$.
A decomposition of the set of vertices in one row of the graph $\Gamma_{n, n}$ into two disjoint subsets with $k$ and $n-k$ elements in each defines two subalgebras $B_{k, n}$ and $B_{n-k, n}$ in algebra $B_{n, n}$. The intersection of this two subalgebras in $B_{n, n}$ is identified with algebra $A(n)$. The importance of algebra $A(n)$ for us is explained by the following proposition.
\begin{Proposition}
Algebra $B_{n,n}$ is a fiber coproduct of $B_{k, n}$ and $B_{n-k, n}$ over $A(n)$.
\end{Proposition}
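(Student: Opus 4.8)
The plan is to verify directly the universal property of the fiber coproduct (pushout) in the category of unital $F$-algebras: $B_{k,n}*_{A(n)}B_{n-k,n}$ is the universal algebra receiving homomorphisms from the two factors that agree on $A(n)$. Throughout I regard $B_{k,n}$ and $B_{n-k,n}$ as subalgebras of $B_{n,n}$ via the inclusions $j_k$ (sending $p_i\mapsto p_i$, $i\le k$, and $q_j\mapsto q_j$) and $j_{n-k}$ (sending $p_i\mapsto p_i$, $i>k$, and $q_j\mapsto q_j$), and I must first pin down the two structure maps of $A(n)$. On the common idempotents they are the identity, $q_j\mapsto q_j$; the generator $P$ is sent to $\sum_{i=1}^{k}p_i$ in $B_{k,n}$ and to $1-\sum_{i=k+1}^{n}p_i$ in $B_{n-k,n}$, so that $j_k\iota_k=j_{n-k}\iota_{n-k}$ precisely because $\sum_{i=1}^{n}p_i=1$ in $B_{n,n}$. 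First I would check $\iota_k,\iota_{n-k}$ are well defined by verifying the relations of $A(n)$: $P^2=P$ because a sum of pairwise orthogonal idempotents (intra-row orthogonality being a relation of $\Gamma_{k,n}$, resp.\ $\Gamma_{n-k,n}$) is idempotent, and $q_jPq_j=\tfrac{k}{n}q_j$ by summing the relations $q_jp_iq_j=\tfrac1n q_j$; thus the intersection is $A(n)$ with $r_i=\tfrac{k}{n}$.

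Next, given any target $C$ with homomorphisms $f\colon B_{k,n}\to C$ and $g\colon B_{n-k,n}\to C$ satisfying $f\iota_k=g\iota_{n-k}$, I would define $h\colon B_{n,n}\to C$ on generators by $h(p_i)=f(p_i)$ for $i\le k$, $h(p_i)=g(p_i)$ for $i>k$, and $h(q_j)=f(q_j)=g(q_j)$. Uniqueness is immediate, since these values are forced by $hj_k=f$ and $hj_{n-k}=g$. The real content is that $h$ respects the relations of $B_{n,n}$. Every relation confined to a single block is inherited from $f$ or $g$ being a homomorphism: the idempotencies, the relations $p_iq_jp_i=\tfrac1n p_i$ and $q_jp_iq_j=\tfrac1n q_j$, intra-row orthogonality of the $p$'s within one block and of all the $q$'s, and $\sum_j q_j=1$. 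The relation $\sum_{i=1}^{n}p_i=1$ is exactly the amalgamation condition: writing $e:=f(\iota_k(P))=g(\iota_{n-k}(P))$, the hypothesis $f\iota_k=g\iota_{n-k}$ says $\sum_{i\le k}f(p_i)=1-\sum_{i>k}g(p_i)$, i.e.\ $\sum_i h(p_i)=1$.

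The hard part — and the one relation of $B_{n,n}$ not present in either factor or in the amalgamation data — is the cross-orthogonality $p_ip_{i'}=0$ for $i\le k<i'$. I would derive it from the idempotent $e$, which is the crux of the argument. The absorption relations $p_iP=Pp_i=p_i$ in $B_{k,n}$ give $f(p_i)=e\,f(p_i)=f(p_i)\,e$, while, setting $\bar P:=\sum_{i>k}p_i=1-\iota_{n-k}(P)$ and using $p_{i'}\bar P=\bar P p_{i'}=p_{i'}$ in $B_{n-k,n}$, one gets $g(\bar P)=1-e$ and hence $g(p_{i'})=(1-e)\,g(p_{i'})=g(p_{i'})(1-e)$. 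Since $e(1-e)=0$, it follows that $h(p_i)h(p_{i'})=f(p_i)\,e(1-e)\,g(p_{i'})=0$ and $h(p_{i'})h(p_i)=g(p_{i'})(1-e)\,e\,f(p_i)=0$. Geometrically this is the statement that the two $p$-blocks are supported under the complementary projectors $e$ and $1-e$. With all relations verified, $h$ is a well-defined homomorphism with $hj_k=f$ and $hj_{n-k}=g$, which establishes the universal property and thus the claimed fiber coproduct.

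An equivalent route is to compare presentations: the pushout is presented by the disjoint union of the generators and relations of $B_{k,n}$ and $B_{n-k,n}$ together with the single identification $\sum_{i\le k}p_i=1-\sum_{i>k}p_i$. This list agrees with the presentation of $B_{n,n}$ except that it omits the cross-orthogonality relations, and the computation above shows precisely that those are consequences of the imposed relation $\sum_i p_i=1$ and the block idempotency. I expect the idempotent-decomposition step to be the only non-formal ingredient; the remaining verifications are routine bookkeeping of the defining relations.
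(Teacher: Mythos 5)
The paper states this proposition without proof, so there is no argument of the authors' to compare against; your write-up in effect supplies the omitted verification, and it is correct. You set up the structure maps the right way (the only choice that makes the two parameters $r_j$ agree is $P\mapsto\sum_{i\le k}p_i$ on one side and $P\mapsto 1-\sum_{i>k}p_i$ on the other, both giving $r_j=k/n$ with $\sum_j r_j=k$, consistent with the paper's normalization $q_jPq_j=\frac12 q_j$ for $n=6$, $k=3$), and you correctly isolate the one genuinely non-formal point: the cross-row orthogonality $p_ip_{i'}=0$ for $i\le k<i'$ is not a relation of either factor and must be derived. Your derivation is sound: $e=f(\iota_k(P))$ is an idempotent absorbing the first block ($f(p_i)e=ef(p_i)=f(p_i)$ by intra-block orthogonality), $1-e=g(\bar P)$ absorbs the second block, and $e(1-e)=0$ kills both cross products. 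All remaining relations of $B_{n,n}$ are either inherited from the factors or are exactly the amalgamation condition $\sum_i h(p_i)=1$, and uniqueness of $h$ is forced on generators, so the universal property of the pushout holds. The only point worth making explicit in a final write-up is that the identification of the $q_j$'s across the two factors (needed so that $h(q_j)$ is well defined) is itself part of the amalgamation over $A(n)$, since the $q_j$ are generators of $A(n)$; you use this implicitly and it is fine.
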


For algebra $A$, denote by ${\rm Rep}_nA$ the affine variety parameterizing $n$-dimensional representations of $A$.
The above proposition implies:
\begin{Corollary} For every positive $l$, we have the fiber product decomposition:
$$
{\rm Rep}_lB_{n,n}={\rm Rep}_lB_{k,n}\times_{{\rm Rep}_lA(n)}{\rm Rep}_lB_{n-k,n}
$$
\end{Corollary}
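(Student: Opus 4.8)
The plan is to deduce the Corollary from the preceding Proposition by a formal functor-of-points argument, exploiting that forming the variety of $l$-dimensional representations turns colimits of algebras into limits of affine schemes.

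First I would recall the representable description of ${\rm Rep}_lA$. For any unital $F$-algebra $A$, an $l$-dimensional representation is the same datum as a unital $F$-algebra homomorphism $A\to {\rm Mat}_l(F)$; more generally the affine scheme ${\rm Rep}_lA$ represents the functor on commutative $F$-algebras $R\mapsto {\rm Hom}_{\rm alg}(A,{\rm Mat}_l(R))$, where ${\rm Hom}_{\rm alg}$ denotes unital homomorphisms. Concretely, a presentation of $A$ by generators and relations turns the relations into polynomial equations on the entries of the matrices assigned to the generators, and these equations cut out the defining ideal of ${\rm Rep}_lA$.

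Next I would invoke the universal property of the fiber coproduct supplied by the Proposition. Writing $B_{n,n}=B_{k,n}\ast_{A(n)}B_{n-k,n}$, the pushout property says that for every target algebra $S$ a homomorphism $B_{n,n}\to S$ is the same as a pair of homomorphisms $B_{k,n}\to S$ and $B_{n-k,n}\to S$ whose restrictions to the common subalgebra $A(n)$ agree. Applying this with $S={\rm Mat}_l(R)$ for an arbitrary commutative $F$-algebra $R$ yields a bijection, natural in $R$,
\[
{\rm Hom}_{\rm alg}(B_{n,n},{\rm Mat}_l(R))={\rm Hom}_{\rm alg}(B_{k,n},{\rm Mat}_l(R))\times_{{\rm Hom}_{\rm alg}(A(n),{\rm Mat}_l(R))}{\rm Hom}_{\rm alg}(B_{n-k,n},{\rm Mat}_l(R)).
\]
The maps defining the right-hand fiber product are induced by the algebra inclusions $A(n)\hookrightarrow B_{k,n}$ and $A(n)\hookrightarrow B_{n-k,n}$, which on representation varieties are exactly the restriction morphisms ${\rm Rep}_lB_{k,n}\to{\rm Rep}_lA(n)$ and ${\rm Rep}_lB_{n-k,n}\to{\rm Rep}_lA(n)$.

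Finally I would translate this identity of functors into the claimed isomorphism of schemes. The affine fiber product ${\rm Rep}_lB_{k,n}\times_{{\rm Rep}_lA(n)}{\rm Rep}_lB_{n-k,n}$ represents precisely the fiber product of the three functors appearing on the right above, so the displayed natural bijection identifies its functor of points with that of ${\rm Rep}_lB_{n,n}$; by Yoneda this gives the desired isomorphism, valid scheme-theoretically rather than merely on closed points. The one step requiring genuine care, and hence the main obstacle, is the very first one: one must make sure the Proposition's fiber coproduct is a pushout in the category of \emph{all} associative unital $F$-algebras, not merely commutative ones, so that its universal property may be applied to the noncommutative target ${\rm Mat}_l(R)$, and that the scheme structure placed on ${\rm Rep}_l$ is indeed the representing one compatible with this universal property. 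Once these compatibilities are in place the remainder is formal.
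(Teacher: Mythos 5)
Your proposal is correct and is essentially the argument the paper intends: the paper gives no explicit proof, simply asserting that the fiber coproduct decomposition of $B_{n,n}$ over $A(n)$ implies the fiber product decomposition of representation varieties, and the formal mechanism behind that implication is exactly your functor-of-points computation (apply ${\rm Hom}_{\rm alg}(-,{\rm Mat}_l(R))$ to the pushout of algebras and invoke Yoneda). Your closing caveat --- that the fiber coproduct must be a pushout in the category of all unital associative algebras so that the universal property applies to the noncommutative target ${\rm Mat}_l(R)$ --- is the right thing to check and is indeed how the Proposition is meant.
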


Denote by ${\cal M}_nA={\rm Rep}_nA/GL(n)$, the GIT moduli space of $A$-representations. Unfortunately, the fiber coproduct decompositions for algebras does not imply fiber product decompositions for moduli spaces of representations, primarily due to the presence of nontrivial automorphisms of representations.

Denote $X(k, n)={\cal M}_nB_{k,n}$ and $Y(n)={\cal M}_nA(n)$. Consider the open subset $Y(n)_o$ in $Y(n)$ of points corresponding to irreducible representations. Let $X(k, n)_o$ be the open subset in $X(k, n)$ of points corresponding to $B_{k,n}$-representations that restrict to irreducible $A(n)$-representations.

\begin{Proposition}\label{xnn0}
We have:
$$
X(n,n)_o=X(k,n)_o\times_{{Y(n)_o}}X(n-k, n)_o
$$
\end{Proposition}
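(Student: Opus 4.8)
The plan is to descend the fiber product of representation varieties supplied by the preceding Corollary to a fiber product of moduli spaces, using that over the irreducible locus the relevant group action is free, so that the GIT quotients become geometric quotients compatible with fiber products.

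First I would record the representation-variety identity from the Corollary (with $l=n$):
$$
{\rm Rep}_n B_{n,n}={\rm Rep}_n B_{k,n}\times_{{\rm Rep}_n A(n)}{\rm Rep}_n B_{n-k,n},
$$
where the two maps to ${\rm Rep}_n A(n)$ are restriction along the inclusions $A(n)\hookrightarrow B_{k,n}$ and $A(n)\hookrightarrow B_{n-k,n}$. All three varieties carry the conjugation action of $GL(n)$, these restriction maps are $GL(n)$-equivariant, and the displayed identity is one of $GL(n)$-varieties. Passing to the open subsets where the restricted $A(n)$-representation is irreducible, which I denote by a subscript $o$, the fiber product restricts to these opens, since irreducibility of the $A(n)$-restriction is a condition on the common image in ${\rm Rep}_n A(n)$.

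Next I would exploit irreducibility. If a $B_{k,n}$-representation restricts to an irreducible $A(n)$-representation, then it is itself irreducible, because $A(n)$ is a subalgebra of $B_{k,n}$ and every $B_{k,n}$-invariant subspace is $A(n)$-invariant; the same applies to $B_{n-k,n}$. By Schur's lemma the automorphisms of an irreducible representation are scalars, so the $GL(n)$-stabilizer of every point of the three opens is exactly the center, i.e. $PGL(n)$ acts freely on all of them. Hence over these loci the GIT quotients $X(k,n)_o$, $X(n-k,n)_o$ and $Y(n)_o$ are geometric quotients and the quotient maps are principal $PGL(n)$-bundles.

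Finally I would invoke the general principle that for a free action the quotient commutes with fiber products: the canonical morphism
$$
\bigl({\rm Rep}_n B_{k,n}\times_{{\rm Rep}_n A(n)}{\rm Rep}_n B_{n-k,n}\bigr)_o/PGL(n)\longrightarrow X(k,n)_o\times_{Y(n)_o}X(n-k,n)_o
$$
is an isomorphism. Surjectivity on points is the gluing step: given orbits $[\rho_1]$ and $[\rho_2]$ with equal image in $Y(n)_o$, the restrictions $\rho_1|_{A(n)}$ and $\rho_2|_{A(n)}$ lie in a single $GL(n)$-orbit, so after replacing $\rho_1$ by a conjugate they coincide as representations and $(\rho_1,\rho_2)$ becomes a genuine point of the fiber product. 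Injectivity uses freeness on $Y(n)_o$: once the $A(n)$-restrictions are made to agree, the conjugating element is determined up to a scalar, since two elements intertwining the same irreducible $A(n)$-representation differ by a scalar. Combining this isomorphism with the tautological identification $({\rm Rep}_n B_{n,n})_o/PGL(n)=X(n,n)_o$ yields the claim.

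I expect the main obstacle to be upgrading the last step from a bijection on closed points to an isomorphism of varieties. This is precisely where the principal-bundle property earned from the free action is indispensable: it allows one to descend the representation-variety fiber product along the bundle and identify it with the fiber product of the quotients. Care is also needed to confirm that the irreducible locus is genuinely open and that the identification of irreducible with GIT-stable points makes each quotient geometric, so that the fiber products on both sides are computed in the same category.
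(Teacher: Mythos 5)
Your argument is correct and is exactly the intended one: the paper states Proposition \ref{xnn0} without proof, but its preceding remark that the obstruction to descending the fiber coproduct to moduli spaces is "the presence of nontrivial automorphisms of representations" makes clear that the expected argument is precisely yours --- over the irreducible locus Schur's lemma kills the automorphisms, $PGL(n)$ acts freely with single closed orbits as fibers, and the fiber product of representation varieties descends to the fiber product of geometric quotients. Your closing caveat about upgrading the pointwise bijection to an isomorphism via the principal-bundle (torsor) property is the right place to be careful, and the standard facts about representation varieties (irreducible locus open and saturated, quotient a $PGL(n)$-torsor there) supply what is needed.
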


\section{Moduli spaces of representations for subgraphs of graph $\Gamma_{6,6}$}

\subsection{Representation moduli spaces $X$, $Y$ and $S$}
Let us consider the full bipartite graph $\Gamma_{3,3}$ with 3 vertices in both rows. Denote by $p_i, i=1,2,3$ (respectively, by $q_j, j=1,2,3$) the idempotents in $B_{\frac16}(\Gamma_{3,3})$ corresponding to vertices in the first (respectively, second) row of the graph. Let $X=X_{3,3}$ be the GIT moduli space of 6-dimensional representations for the algebra $B_{\frac16}(\Gamma_{3,3})$ where all idempotents $p_i$ and $q_j$ are presented by projectors of rank 1.

One can check that $X\simeq (F^*)^4$. To this end, one can interpret algebra $B_{\frac16}(\Gamma_{3,3})$ as a {\em homotope} of the path algebra of the graph (see section \ref{algebraic} and \cite{BZ}). Homotope $B$ over an algebra $A$ has a canonical maximal two-sided ideal $B^+$, which is endowed with the left module structure of $A$ that commutes with the right action of $B$ (see subsection \ref{hmtop}). This allows us to consider functor ${\rm Hom}_B(B^+, -):{\rm mod B}\to {\rm mod}A$.

Applying this general theory to $B_{\frac16}(\Gamma_{3,3})$ as a homotope of the path algebra $F\Gamma_{3,3}$ of the graph,
and taking into account the fact that $F\Gamma_{3,3}$ is Morita equivalent to the group algebra of the fundamental group of the graph, which is a free group in 4 generators, implies that the above functor has an interpretation as a functor that takes $B_{\frac16}(\Gamma_{3,3})$-modules to representations of the fundamental group. Moreover, the representations that are parameterized by $X$ are taken to representations of dimension 1. The moduli space of the latter is $(F^*)^4$, hence the map $X\to (F^*)^4$. One can see that the map is one-to-one on closed points, due to interpretation of closed points as equivalence classes of representations. Thus the map is a birational morphism. Since $(F^*)^4$ is smooth, in particular, normal, it follows that the map is an isomorphism.

Algebra $A_3$ has generators $P$ and $q_j, j=1,2,3$, satisfying relations $P^2=P$, $q_j^2=q_j$ and $q_jPq_j=\frac{1}{2}q_j$.
This algebra is endowed with an {\em involution} $\sigma$, which is of particular importance for us. It is given by  $\sigma :P\mapsto 1-P$.
Let $Y$ be the GIT moduli space of 6-dimensional representations of $A_3$ in which $P$ is represented by a projector of rank 3 and $q_j$'s by projectors of rank 1. This is a 4-dimensional variety.

The algebra homomorphism $A_3 \to B_{\frac16}(\Gamma_{3,3})$ given on generators $P\mapsto \sum p_i$ and $q_j\mapsto q_j$ defines a map $f:X_{3,3}\to Y$, which is in fact a quasi-finite map of degree 12.

We will also consider algebra $C$ with generators $P$ and $Q$ and relations $P^2=P$, $Q^2=Q$. The moduli space of 6 dimensional representations for this algebra where both $P$ and $Q$ are represented by projectors of rank 3 and ${\rm Tr}PQ=\frac32$,  is denoted by $S$. It has dimension 2. We have a morphism $g:Y\to S$ defined by the algebra homomorphism $C\to A_3$ that takes $P\mapsto P$ and $Q\mapsto \sum q_j$.

We will consider another copy of $A_3$ with generators denoted by $Q$ and $p_i, i=1,2,3$, which play the roles of $P$ and $q_j, j=1,2,3$, respectively, in the first copy. Then we have a following commutative square of algebras, where we denote algebras together with their generators:


\begin{equation}\label{baac}
\xymatrix{
& B_{\frac16}(\Gamma_{3,3})(p_1,p_2,p_3;q_1,q_2,q_3)\\
A_3(Q;p_1,p_2,p_3)\ar[ru] && A_3(P;q_1,q_2,q_3)\ar[lu]\\
& C(P;Q)\ar[ru]\ar[lu]
}
\end{equation}
In the north-west pointed arrows of this diagram, $P$ is taken to $\sum p_i$ and, in the north-east pointed  arrows, $Q$ goes to $\sum q_j$.
We also have the induced commutative square of moduli spaces:

\begin{equation}\label{xyyd}
\xymatrix{
& X\ar[rd]^{f}\ar[ld]_{f \circ \tau}\\
Y\ar[rd] && Y\ar[ld]\\
& S
}
\end{equation}
where $\tau$ is an involution on $X$ which comes from the involution on algebra $B_{\frac16}(\Gamma_{3,3})$ defined by the exchange of $p_i$ with $q_i$, for $i=1,2,3$.

Let us introduce functions $u_1$, $u_2$ on $Y$:
\begin{equation}
\label{u1u2u3}
u_1= {6^2}({\rm Tr}Pq_1Pq_2+{\rm Tr}Pq_1Pq_3+{\rm Tr}Pq_2Pq_3),
\end{equation}
\begin{equation}
u_2= {6^3}({\rm Tr}Pq_1Pq_2Pq_3 + {\rm Tr}Pq_1Pq_3Pq_2),
\end{equation}

One can easily check that $u_1$ is ${\rm Tr}PQPQ$ up to a constant multiplier, while $u_2$ can be expressed as a linear combination of ${\rm Tr}PQPQPQ$, ${\rm Tr}PQPQ$ and the unit. It follows that $u_1$ and $u_2$ are well-defined regular functions on $S$, moreover, they generate the algebra of functions $F[S]$.

\subsection{Space $\cal U$}

Now we consider a new function on $Y$:
\begin{equation}
u_3 = ({6^2}\Tr Pq_1Pq_2 - 1)({6^2}\Tr Pq_2Pq_3 - 1)({6^2}\Tr Pq_3Pq_1 - 1).
\end{equation}

We have the 3-dimensional affine space ${\cal U} = {\rm Spec} F[u_1,u_2,u_3]$. It is endowed with natural surjective maps: ${\cal U} \to S$ and $\Theta: Y \to {\cal U}$. The variety ${\cal U}$ is important for us, because many calculations that we perform are done relatively over ${\cal U}$. It would be interesting to find a representation theoretic meaning for ${\cal U}$.


\begin{Proposition}
\label{propiden}
Consider two systems of orthogonal projectors $(p_1,p_2,p_3)$ and $(q_1,q_2,q_3)$ of rank 1 in a vector space, satisfying condition $\Tr p_i q_j =\frac16 $. Let $P=p_1+p_2+p_3$ and $Q=q_1+q_2+q_3$. Then the following identity holds:
\begin{equation}
\label{identity pq}
\prod_{(i,j) \in \{1,2,3\}}(6^2\Tr(Pq_iPq_j)-1) = \prod_{(i,j) \in \{1,2,3\}}(6^2\Tr(Qp_iQp_j)-1).
\end{equation}
\end{Proposition}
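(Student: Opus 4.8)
The plan is to encode the entire configuration in a single $3\times 3$ matrix and to recognize the asserted identity as invariance of an explicit rational function under transposition of that matrix. Write each rank-$1$ idempotent as $p_i=a_ib_i^{T}$ and $q_j=v_jw_j^{T}$, where the biorthogonality relations $b_i^{T}a_{i'}=\delta_{ii'}$ and $w_j^{T}v_{j'}=\delta_{jj'}$ encode orthogonality within each system (such a presentation exists over any field of characteristic zero). Set $S_{ij}=b_i^{T}v_j$. The hypothesis $\Tr p_iq_j=\tfrac16$ reads $(b_i^{T}v_j)(w_j^{T}a_i)=\tfrac16$, so $w_j^{T}a_i=1/(6S_{ij})$; in particular every entry $S_{ij}$ is invertible. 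Expanding $P=\sum_k a_kb_k^{T}$ and using $\Tr(Pq_iPq_j)=(w_i^{T}Pv_j)(w_j^{T}Pv_i)$, one gets the clean expression
\begin{equation}
36\,\Tr(Pq_iPq_j)=\Big(\sum_{k}\frac{S_{kj}}{S_{ki}}\Big)\Big(\sum_{l}\frac{S_{li}}{S_{lj}}\Big).
\end{equation}
The key structural observation is that the analogous computation for $\Tr(Qp_iQp_j)$ produces exactly the same function with $S$ replaced by its transpose $S^{T}$. Writing $F(S)$ for the product over $(i,j)\in\{(1,2),(2,3),(3,1)\}$ of the factors $36\,\Tr(Pq_iPq_j)-1$, the identity (\ref{identity pq}) becomes simply $F(S)=F(S^{T})$.

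Next I would cut down the number of variables. The rescalings $a_i\mapsto\lambda_ia_i,\ b_i\mapsto\lambda_i^{-1}b_i$ and $v_j\mapsto\mu_jv_j,\ w_j\mapsto\mu_j^{-1}w_j$ multiply the rows and columns of $S$ by scalars and leave every quantity $\Tr(Pq_iPq_j)$ unchanged (each summand $S_{kj}/S_{ki}$ is homogeneous of the right degree). Using this freedom I normalize $S$ to have first row and first column equal to $(1,1,1)$, so that
\begin{equation}
S=\begin{pmatrix}1&1&1\\ 1&a&b\\ 1&c&d\end{pmatrix},\qquad a,b,c,d\in F^{*}.
\end{equation}
Carrying out the sums and simplifying, each factor factorizes neatly; for instance
\begin{equation}
36\,\Tr(Pq_1Pq_2)-1=\frac{(a+c)(1+a)(1+c)}{ac},
\end{equation}
and cyclically the remaining two factors are $\tfrac{(b+d)(1+b)(1+d)}{bd}$ and $\tfrac{(ad+bc)(a+b)(c+d)}{abcd}$. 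Multiplying the three yields the closed form
\begin{equation}
F(S)=\frac{(a+b)(a+c)(b+d)(c+d)(ad+bc)(1+a)(1+b)(1+c)(1+d)}{a^{2}b^{2}c^{2}d^{2}}.
\end{equation}

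Finally, transposing $S$ amounts to the interchange $b\leftrightarrow c$, and the displayed rational function is manifestly invariant under it: the pairs $\{a+b,\,a+c\}$ and $\{b+d,\,c+d\}$ are each preserved, the factor $ad+bc$ and the monomial $a^{2}b^{2}c^{2}d^{2}$ are fixed, and $(1+b)\leftrightarrow(1+c)$. Hence $F(S)=F(S^{T})$, which is exactly (\ref{identity pq}).

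I expect the factorization in the previous paragraph to be the real obstacle, and it is essentially the content the authors flag as mysterious. The three individual factors $36\,\Tr(Pq_iPq_j)-1$ are \emph{not} symmetric under $b\leftrightarrow c$; only their product collapses to the symmetric closed form above. So the economical route I see is the explicit verification just sketched (or, equivalently, clearing denominators and checking a single polynomial identity in $a,b,c,d$ directly). A minor point to dispatch along the way is the legitimacy of the dephasing, which rests on the invertibility of all $S_{ij}$ — guaranteed by unbiasedness — and the fact that the vector/covector presentation and the whole computation are valid over an arbitrary field of characteristic zero, not merely in the unitary setting. What remains genuinely unexplained is \emph{why} the non-symmetric factors conspire into a transposition-invariant product; I have no conceptual mechanism beyond the computation, matching the authors' own remark.
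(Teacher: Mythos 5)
Your proof is correct, and I have checked the computation: the biorthogonal presentation $p_i=a_ib_i^T$, $q_j=v_jw_j^T$ is legitimate in any dimension; the relation $w_j^Ta_i=1/(6S_{ij})$ and the formula $36\,\Tr(Pq_iPq_j)=\bigl(\sum_k S_{kj}/S_{ki}\bigr)\bigl(\sum_l S_{li}/S_{lj}\bigr)$ are right; the $Q$-side computation does produce the same expression in $S^T$; the row/column rescaling freedom (5 effective parameters) does allow the normalization of the first row and column to $1$, with transposition then acting as $b\leftrightarrow c$; and the three factorizations, e.g. $(1+a+c)(ac+a+c)-ac=(a+c)(1+a)(1+c)$ and $(ac+u)(bd+u)-abcd=u(a+b)(c+d)$ for $u=ad+bc$, are all verified, giving the manifestly $b\leftrightarrow c$--symmetric closed form for the product. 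Note that the paper itself gives no proof of Proposition \ref{propiden} in this text (the verification is deferred to the companion paper \cite{BZ2}, and the introduction remarks that formula (\ref{identity pq}) ``probably needs a more conceptual explanation than just a verification''), so a direct comparison is not possible; but your argument is exactly a verification of the kind the authors allude to, and the reduction to a single rational identity in four variables via the Gram-type matrix $S$ and dephasing is a clean and economical way to organize it. Your closing observation --- that the individual factors are not transposition-invariant and only the product is --- accurately identifies where the residual mystery lies.
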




This proposition together with above remarks on $u_1$ and $u_2$, allows us to extend diagram (\ref{xyyd}) to the following commutative diagram:
\begin{equation}
\label{commxyyud}
\xymatrix{& X\ar[rd]^{f} \ar[ld]_{f \circ \tau}\\ Y \ar[rd]^{\Theta}\ar[rdd] && Y\ar[ld]_{\Theta}\ar[ldd]\\& {\cal U}\ar[d] \\ & S}
\end{equation}

The induced map $X \to Y \times_{{\cal U}} Y$ is an embedding. Variety $Y \times_{{\cal U}} Y$ is a divisor in $Y \times_{S} Y$, ${\rm dim} Y \times_{{\cal U}} Y = 5, {\rm dim} Y \times_{S} Y = 6$.


Let $S_3$ be the group of permutations in 3 elements. We consider variety $X'=X/(S_3\times S_3)$, where the action of $S_3\times S_3$ on $X$ is induced by the action on $B_{\frac16}(\Gamma_{3,3})$ by independent permutation of $p_i$'s and $q_j$'s. Similarly $Y'=Y/S_3$, where $S_3$ acts on $A_3$ by permuting $q_j$'s, hence the action on $Y$. We have the induced maps $X'\to Y'\to {\cal U}$.

\begin{Proposition}\cite{BZ2}
\label{eliptic}
The fiber of the composite map $X'\to {\cal U}$ over a generic closed point $u\in \cal U$ is a disjoint union of two isomorphic elliptic curves, while the fiber of $Y'\to {\cal U}$ is just one elliptic curve. The map  $X'\to Y'$ maps two components of the fiber of $X'$ over $u$ isomorphically to the fiber of $Y'$ over $u$.
\end{Proposition}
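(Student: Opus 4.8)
The plan is to produce explicit affine models for the two fibrations over $\U$ and then to analyze the map $X'\to Y'$ fibrewise. First I would coordinatize $Y$ by trace invariants. The coordinate ring $F[Y]$ is generated by traces of words in $P,q_1,q_2,q_3$; writing each rank-one projector as $q_j=v_j\otimes\varphi_j$ with $\varphi_j(Pv_j)=\tfrac12$, the relation $q_iPq_i=\tfrac12 q_i$ makes every such trace collapse to a product of the quantities $g_{ij}=\varphi_i(Pv_j)$ taken along a cycle. These cyclic products are invariant under the residual gauge $(v_j,\varphi_j)\mapsto(\lambda_jv_j,\lambda_j^{-1}\varphi_j)$, and one checks that the invariant ring is generated by $a=6^2g_{12}g_{21}$, $b=6^2g_{23}g_{32}$, $c=6^2g_{31}g_{13}$ together with the two triangles $w=6^3g_{12}g_{23}g_{31}$, $\bar w=6^3g_{13}g_{32}g_{21}$, subject to the single relation $w\bar w=abc$. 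Thus $Y$ is the four-dimensional hypersurface $\{w\bar w=abc\}$, and in these coordinates $u_1=a+b+c$, $u_2=w+\bar w$, $u_3=(a-1)(b-1)(c-1)$.

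Next I would compute $Y'=Y/S_3$ and its fibre over a generic $u\in\U$. The group $S_3$ permutes $(a,b,c)$ and, on odd permutations, exchanges $w\leftrightarrow\bar w$; the invariant ring is therefore generated by the elementary symmetric functions $e_1,e_2,e_3$ of $a,b,c$, by $u_2=w+\bar w$, and by the mixed invariant $\eta=(w-\bar w)(a-b)(b-c)(c-a)$, subject to $\eta^2=(u_2^2-4e_3)\,{\rm disc}(a,b,c)$, where ${\rm disc}$ is the discriminant of $T^3-e_1T^2+e_2T-e_3$. Fixing $(u_1,u_2,u_3)$ imposes $e_1=u_1$ and $e_2=e_3+u_1-1-u_3$, so the fibre becomes the plane curve $\eta^2=(u_2^2-4e_3)\,{\rm disc}(u_1,\,e_3+u_1-1-u_3,\,e_3)$ in the variables $(e_3,\eta)$. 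The right-hand side is a quartic in $e_3$, and a discriminant computation shows it is squarefree for generic $u$; hence the fibre is a smooth irreducible genus-one curve $E_u$, which is the assertion for $Y'$.

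It remains to treat $X'\to Y'$. Since $f$ has degree $12$, while $X\to X'=X/(S_3\times S_3)$ and $Y\to Y'=Y/S_3$ have degrees $36$ and $6$, and the $S_3$ permuting the $p_i$ acts within the fibres of $f$, the induced map $\bar f:X'\to Y'$ has degree $2$. Restricting to a generic $u$ gives a degree-two cover $X'_u\to E_u$, and the proposition is equivalent to this cover being unramified and disconnected: each of its two components is then finite and birational, hence isomorphic, onto the normal curve $E_u$. This splitting is the crux. I would prove it by showing that the discriminant of $\bar f$, viewed as an element of the function field $F(E_u)=F(Y'_u)$, is a perfect square; a square root furnishes a rational section, and for a degree-two cover a section forces a splitting into two sections. Étaleness over the generic $E_u$ then follows by checking that the two unbiased splittings of $P$ never collide, i.e.\ that the branch locus of $\bar f$ misses the generic fibre.

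The hard part is exhibiting that square root. Here I expect the identity (\ref{identity pq}) of Proposition \ref{propiden} to be the decisive input: via the embedding $X\hookrightarrow Y\times_{\U}Y$ and the involution $\tau$ of diagram (\ref{xyyd}) it synchronizes the $p$-side and $q$-side invariants over $\U$, and I anticipate that it is exactly this synchronization that renders the discriminant a square in $F(E_u)$ (which would also explain why that identity, flagged as demanding a conceptual explanation, is so central). The two components of $X'_u$ should moreover be interchanged by an involution induced by $\sigma:P\mapsto 1-P$ acting along the elliptic fibres, and preserved by $\tau$, so that the remaining bookkeeping identifies each component isomorphically with the fibre $E_u$ of $Y'$, completing the identification $X'_u\cong E_u\sqcup E_u$.
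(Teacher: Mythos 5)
The paper itself does not prove this proposition --- it is imported from \cite{BZ2} --- so your argument has to stand on its own. The first half of your plan, the analysis of $Y'\to {\cal U}$, is essentially sound and is a sensible way to see the elliptic curve: by Procesi's theorem the invariants of the representation variety are generated by traces of words, every such trace collapses to a cyclic product of the $g_{ij}$, and the ring of cyclic products is $F[a,b,c,w,\bar w]/(w\bar w-abc)$; the $S_3$-invariant computation then exhibits the generic fiber as $\eta^2=(\hbox{quartic in }e_3)$. You still owe two routine checks (that the trace map onto the hypersurface is surjective and separates closed orbits, so that $Y$ \emph{is} the hypersurface, and that the quartic is squarefree for at least one, hence a generic, $u$), but these are not the issue.

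The genuine gap is in the second half, which is where the actual content of the proposition lives. You correctly reduce the claim about $X'$ to the assertion that the degree-two cover $X'_u\to E_u$ splits, but you do not prove that assertion: you neither write down the quadratic equation defining $F(X'_u)$ over $F(E_u)$ nor compute its discriminant, nor show how identity (\ref{identity pq}) forces that discriminant to be a square --- this is all deferred to ``I expect'' and ``I anticipate''. As it stands your argument is equally consistent with $X'_u$ being a single irreducible curve double-covering $E_u$, which would falsify the statement. To close the gap you would need an explicit model of $X\simeq (F^*)^4$ and of $f:X\to Y$ in those coordinates (the paper's homotope/fundamental-group description of $X$ is the natural source of such a model), from which the quadratic extension and its discriminant can actually be computed; only then can Proposition \ref{propiden} be brought to bear as more than a heuristic. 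Two smaller points: the degree count $72/36=2$ requires knowing that $S_3\times S_3$ acts generically freely on $X$ and that $\bar f$ is dominant onto $Y'$; and since $f$ is only quasi-finite, ``finite and birational onto a normal curve, hence an isomorphism'' should be weakened to an open immersion unless you separately establish properness over $E_u$ (the statement is anyway only true up to the affine-open caveat made in the introduction of the paper).
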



\subsection{Representation moduli space $X(3,6)$}

Let us consider the full bipartite graph $\Gamma_{3,6}$ with 3 vertices in the first row and 6 vertices in the other one.
Denote by $p_i, i=1,2,3$ (respectively, by $q_j, j=1,\dots , 6$), the idempotents in $B_{\frac16}(\Gamma_{3,6})$ corresponding to vertices in the first (respectively, second) row of the graph. Consider algebra $B_{3,6}$, the quotient of $B_{\frac16}(\Gamma_{3,6})$ by the two-sided ideal generated by $\sum q_j -1$. Let $X(3,6)$ be the GIT moduli space of 6-dimensional representations of algebra $B_{3,6}$ where all idempotents $p_i$ and $q_j$ are represented by projectors of rank 1.

Consider the map $X(3,6)\to X$ induced by the algebra homomorphism $B_{\frac16}(\Gamma_{3,3})\to B_{3,6}$ defined by $p_i\mapsto p_i$ and $q_j\mapsto q_j$. We will also consider a second copy of $B_{\frac16}(\Gamma_{3,3})$ with generators $p_i, i=1,2,3$ and $q_j, j=4,5,6$ and a second map $X_{3,6}\to X$ induced by the similar algebra homomorphism $B_{\frac16}(\Gamma_{3,3})\to B_{3,6}$ defined by $p_i\mapsto p_i$ and $q_j\mapsto q_j$. By combining with two maps $f, \sigma \circ f:X\to Y $, we obtain a commutative diagram:

\begin{equation}\label{xxxy}
\xymatrix{
& X(3,6)\ar[rd]^{p_1} \ar[ld]_{p_2}\\
X\ar[rd]^{\sigma \circ f} && X\ar[ld]_{f}\\
& Y
}
\end{equation}

\begin{Theorem}
\label{irrx}
Variety $X(3,6)$ is irreducible of dimension 4. Variety $X\times_YX$ has only one irreducible component of dimension 4 and all the other components of lower dimension.
The map $h:X(3,6)\to X\times_YX$ induced by the above diagram establishes a birational isomorphism of $X(3,6)$ with the 4-dimensional irreducible component of $X\times_YX$.
\end{Theorem}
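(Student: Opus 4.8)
The three assertions are tightly linked through the restriction morphism $h$, so the plan is to analyse $h$ first and then bootstrap between the two sides. The map $h$ sends a $6$-dimensional $B_{3,6}$-representation to the pair of its restrictions to the two subgraphs $\Gamma_{3,3}$ spanned by $(p_1,p_2,p_3;q_1,q_2,q_3)$ and by $(p_1,p_2,p_3;q_4,q_5,q_6)$. Writing $Q_1=q_1+q_2+q_3$ and $Q_2=q_4+q_5+q_6$, the defining relation $\sum_{j=1}^6 q_j=1$ forces $Q_2=1-Q_1$. The two restrictions share the rank-one system $(p_1,p_2,p_3)$, and the complementary rank-three projectors $Q_1,Q_2$ are exchanged by the involution $\sigma\colon P\mapsto 1-P$ on $Y$; hence the two composites $X(3,6)\to Y$ in diagram (\ref{xxxy}) coincide and $h$ is well defined. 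Unwinding the definitions, a point of $X\times_Y X$ over $y\in Y$ is exactly an independent choice of a splitting of $Q_1$ into three rank-one idempotents and of $1-Q_1$ into three more, all subject to $\Tr(p_iq_j)=\frac16$.

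First I would bound dimensions. Because $f$ is quasi-finite of degree $12$, so is $\sigma\circ f$, hence both projections $X\times_Y X\to X$ have finite fibres; as $X\cong(F^*)^4$ this gives $\dim(X\times_Y X)\le 4$ on every component. Since $f$ and $\sigma\circ f$ are dominant (both $X$ and $Y$ are irreducible of dimension $4$ and the fibres are finite), any $4$-dimensional component dominates $Y$ and therefore meets the locus over $Y_o$, while a component lying over the proper closed subset $Y\setminus Y_o$ of non-irreducible $A_3$-representations has dimension $\le 3$. Thus the whole content is concentrated over $Y_o$.

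Next I would prove that $h$ is an isomorphism over $Y_o$, i.e. birational onto $X\times_Y X$, using gluing rigidity exactly as in Proposition \ref{xnn0}. Over $y\in Y_o$ the shared $A_3$-representation $(Q_1;p_1,p_2,p_3)$ is irreducible, hence has only scalar automorphisms; therefore a matching pair of restrictions admits a unique, up to isomorphism, common realisation, which reassembles into a single $B_{3,6}$-representation. This identifies $X(3,6)_o$ with the fibre of $X\times_Y X$ over $Y_o$. In particular the two remaining clauses — irreducibility of $X(3,6)$ and uniqueness of the $4$-dimensional component — become the \emph{same} statement transported across $h$: each is equivalent to the irreducibility of $\bigl(X\times_Y X\bigr)|_{Y_o}$, since every $4$-dimensional component meets this locus and the lower-dimensional components sit over $Y\setminus Y_o$.

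The hard part is precisely this irreducibility, which amounts to showing that the monodromy of the $144$-sheeted cover $\bigl(X\times_Y X\bigr)|_{Y_o}\to Y_o$ acts transitively on a general fibre (the $12\times 12$ splittings of $Q_1$ and of $1-Q_1$). Here I would exploit the hidden elliptic fibration of Proposition \ref{eliptic}. Dividing by the $S_3$ that relabels a triple of $q$'s turns $f$ into the essentially degree-$2$ map whose two sheets are the two isomorphic elliptic components of $X'_u$ over $\U$; consequently the independent relabellings of $(q_1,q_2,q_3)$ and $(q_4,q_5,q_6)$ — an $S_3\times S_3$ of deck transformations — already act transitively on the $6\times 6$ orderings, and transitivity of the full monodromy reduces to connecting the remaining $2\times 2$ choices of elliptic component. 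For the latter I would use the nontrivial monodromy of the elliptic fibration $Y'\to\U$ around its discriminant, together with the involution $\sigma$, which interchanges the $Q_1$- and $(1-Q_1)$-splittings, to merge all four into one orbit; this transitivity statement is the genuine obstacle and is where the detailed algebraic geometry of \cite{BZ2} is required. Granting it, $\bigl(X\times_Y X\bigr)|_{Y_o}$ is irreducible of dimension $4$, its closure is the unique $4$-dimensional component of $X\times_Y X$, and via the birational morphism $h$ the variety $X(3,6)$ is irreducible of dimension $4$ and birational to that component. An alternative entry point would be to establish the irreducibility of $X(3,6)$ directly from a Crawley-Boevey criterion after a Morita reduction in the spirit of Proposition \ref{morita}, and then feed it back through $h$ to obtain the component count.
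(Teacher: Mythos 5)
The paper itself states Theorem \ref{irrx} without proof (the argument is deferred to \cite{BZ2}), so your attempt can only be measured against the surrounding framework. Within that framework your structural reductions are sound: the identification of $h$ with the pair of restrictions to the two copies of $\Gamma_{3,3}$, the observation that $Q_2=1-Q_1$ is what makes diagram (\ref{xxxy}) commute via the involution $\sigma$, the bound $\dim(X\times_YX)\le 4$ on every component from quasi-finiteness of $f$ and $X\cong (F^*)^4$, the Schur-lemma gluing over the locus of irreducible $A_3$-restrictions in the spirit of Proposition \ref{xnn0}, and the concentration of all $4$-dimensional components over that locus. This is a reasonable skeleton and consistent with how the paper organizes the material.

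However, there is a genuine gap exactly at the theorem's core. As you yourself note, all three assertions reduce to the irreducibility of $X\times_YX$ over $Y_o$, i.e.\ to transitivity of the monodromy of the $144$-sheeted cover on a general fibre, and this you do not prove: you write ``granting it'' and defer to \cite{BZ2}. Moreover, the partial reduction you offer is fallacious: deck transformations permute the irreducible components of a cover, so the fact that $S_3\times S_3$ acts transitively on the orderings within a fibre does not ``already'' connect those sheets --- a trivial cover $Y_o\times F$ has a deck group transitive on $F$ and is maximally disconnected. Similarly, transitivity of the monodromy on each factor of $12$ sheets (which does follow from irreducibility of $X$) does not imply transitivity on the $12\times 12$ product; by Goursat-type considerations this is precisely where the work lies, and the elliptic-fibration argument you gesture at is the entire content of the theorem. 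Two further loose ends: to conclude that $X(3,6)$ itself (not just $X(3,6)_o$) is irreducible you must also exclude components of $X(3,6)$ supported in $X(3,6)\setminus X(3,6)_o$, which needs an argument that $h$, or the restriction map $X(3,6)\to X$, does not collapse dimension there --- quasi-finiteness of $f$ alone does not give this; and the suggested alternative via Crawley-Boevey does not apply as stated, since the Morita equivalence of Proposition \ref{morita} concerns $A(n)$, not $B_{3,6}$.
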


Note that it is quite plausible that $X\times_YX$ is in fact also irreducible, which would mean that map $h$ is birational.

\subsection{Representation moduli spaces $Y(6)$ and $X(6,6)$}

Consider algebra $A(6)$ with generators $P$ and $q_j, j=1,\dots , 6$, and relations:
$$
P^2=P, q_j^2=q_j, q_jPq_j=\frac12q_j, \sum q_j=1.
$$
Algebra $A(6)$ is endowed with the involution $P\mapsto 1-P$ and $q_j\mapsto q_j$.
Denote by $Y(6)$ the GIT moduli space of 6-dimensional representations of algebra $A(6)$ where $P$ is represented by a projector of rank 3 and idempotents $q_j$ are represented by projectors of rank 1. The involution on $A(6)$ induces an involution $\sigma ' :Y(6)\to Y(6)$.

Algebra $A(n)$ is Morita equivalent to the deformed preprojective algebra of the star graph $Q$ with one central vertex and $n$ vertices on the boundary, the central vertex being connected with every boundary vertex by one edge (see section \ref{Morita}). According to Crawley-Boewey result \cite{CB1}, \cite{CB2}, this implies that variety $Y(6)$ is irreducible and has dimension 8.



There is an algebra homomorphism $A(6)\to B_{3,6}$ that takes $P$ to $\sum p_i$. It defines map $g: X(3,6)\to Y(6)$.
Consider the action of group $S_3$ on algebra $B_{3,6}$ which permutes generators $p_1, p_2, p_3$. Clearly, $g$ is an  $S_3$-invariant map. Recall, that according to theorem \ref{irrx} variety $X(3,6)/S_3$ is irreducible.
\begin{Theorem}
\label{birx}
The morphism $g: X(3,6)/S_3\to Y(6)$ maps $X(3,6)/S_3$ birationally on its image in $Y(6)$.
\end{Theorem}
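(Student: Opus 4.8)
\emph{Strategy.} Both $X(3,6)/S_3$ and the closure $Z:=\overline{g(X(3,6))}$ of its image are irreducible: the former by Theorem \ref{irrx} together with the finiteness of the $S_3$-action, the latter as the closure of the image of an irreducible variety. Working over an algebraically closed field of characteristic zero, to prove that $g$ is birational onto $Z$ it is enough to show that its generic fibre is a single reduced point; since $\dim X(3,6)/S_3=4$, this forces $\dim Z=4$ as well. Pulling back along the quotient, the task becomes to show that the generic fibre of $g\colon X(3,6)\to Y(6)$ over $Z$ consists of a single $S_3$-orbit.

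\emph{Reconstruction on the image of $P$.} The first step is to describe the fibre of $g$ over an $A(6)$-representation $(P;q_1,\dots,q_6)$. A point of $X(3,6)$ lying over it is a refinement $P=p_1+p_2+p_3$ into pairwise orthogonal rank-$1$ idempotents satisfying $p_iq_jp_i=\tfrac16 p_i$ for all $i,j$, taken modulo the conjugations that fix the $A(6)$-datum. Since $p_iP=p_i=Pp_i$, each $p_i$ equals its compression $Pp_iP$, so the entire problem lives on the $3$-dimensional space $W=\mathrm{im}\,P$. Set $A_j:=Pq_jP|_W\in\mathrm{End}(W)$; these are six rank-$1$ operators of trace $\tfrac12$ with $\sum_j A_j=\mathrm{id}_W$. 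Writing a rank-$1$ idempotent as $p_i=v_i\phi_i^{\mathrm T}$ with $\phi_i^{\mathrm T}v_i=1$, the relation $p_iA_jp_i=\tfrac16 p_i$ reads $\phi_i^{\mathrm T}A_jv_i=\tfrac16$, while $\sum_i p_i=\mathrm{id}_W$ and $p_ip_{i'}=\delta_{ii'}p_i$ say exactly that $(v_1,v_2,v_3)$ is a basis of $W$ with $(\phi_1,\phi_2,\phi_3)$ its dual basis. Thus the fibre of $g$ is precisely the set of unordered frames of $W$ (each line taken up to scale) in which all six operators $A_j$ have every diagonal entry equal to $\tfrac16$, and $S_3$ acts by permuting the three lines.

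\emph{Finiteness and uniqueness of the frame.} A frame, that is a triple of lines in $W\cong F^3$, moves in a $6$-dimensional family; for each $j$ the requirement that the three diagonal entries of $A_j$ all equal $\tfrac16$ imposes two conditions (their sum is the basis-independent number $\mathrm{Tr}\,A_j=\tfrac12$), and $\sum_j A_j=\mathrm{id}_W$ makes the conditions for different $j$ compatible. The system is therefore strongly over-determined, and I would first check that for $(P;q_j)$ in $Z$ the solution scheme is $0$-dimensional; this already yields $\dim Z=4$ and that $g$ is generically finite. It then remains to show the generic fibre is a single $S_3$-orbit, which I would establish in one of two ways: either by exhibiting a rational inverse, characterising the three lines $\langle v_i\rangle$ intrinsically from the operators $A_j$ so that the defining equations cut out a unique unordered triple generically and $\{p_1,p_2,p_3\}$ becomes a rational function of the $A(6)$-datum; or by computing the fibre at the standard point $x_0$ attached to the standard pair of Cartan subalgebras, verifying there that it is a single reduced $S_3$-orbit with $g$ unramified, and propagating this to the generic point by upper-semicontinuity together with the irreducibility of $X(3,6)/S_3$ and $Z$.

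\emph{Main obstacle.} The crux is the uniqueness in the last step: off $Z$ the diagonal-normalisation system has no solutions, whereas on $Z$ at least one frame exists by construction, and one must rule out spurious extra frames for the generic member of $Z$. This is exactly the point where an explicit handle on $Z$, or on the reconstruction map, is required; the finiteness estimate and the computation at $x_0$ reduce the question to a degree count, and showing that this degree equals $1$ after dividing by $S_3$ is the technically demanding part of the argument.
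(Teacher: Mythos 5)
Your reduction of the statement to a claim about the generic fibre of $g$ is sound, and your description of that fibre as the set of unordered frames of $W=\mathrm{im}\,P$ normalising the diagonals of the six rank-one operators $A_j=Pq_jP|_W$ is a correct and useful reformulation. But the argument stops exactly where the theorem begins: everything hinges on showing that the generic fibre is a single $S_3$-orbit, and you offer only two unexecuted routes to that. The first (``characterise the three lines intrinsically from the $A_j$'') is a restatement of the conclusion. The second --- compute the fibre at the standard point $x_0$ of Lemma \ref{x0} and propagate by upper-semicontinuity --- is not valid as stated: $g$ is only quasi-finite, not proper, and for a non-proper quasi-finite morphism a special fibre consisting of one reduced unramified point does not bound the cardinality of the generic fibre (consider $t\mapsto t^2$ restricted to $\mathbb{A}^1\setminus\{1\}$, over the point $1$: the fibre is the single reduced point $-1$, the map is \'etale there, yet the degree is $2$). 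To make that route work you would need properness of $g$ over a neighbourhood of $g(x_0)$, or a Zariski-main-theorem argument controlling both the whole fibre $g^{-1}(g(x_0))$ and the generic points that could escape it; none of this is addressed. Likewise, the over-determination count only suggests, and does not prove, that the solution scheme is $0$-dimensional on the image; even generic finiteness (equivalently $\dim Z=4$) is asserted rather than derived --- it does follow from the tangent-space statement in Lemma \ref{x0}, but you do not invoke it.

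The paper's route is entirely different and supplies precisely the ``explicit handle'' you identify as missing: it works relatively over the three-dimensional base $\mathcal{U}$ and uses Proposition \ref{eliptic}, which identifies the generic fibre of $X/(S_3\times S_3)\to\mathcal{U}$ with two copies of an elliptic curve mapping isomorphically onto the fibre of $Y/S_3\to\mathcal{U}$; the birationality of $g$ is then extracted from the geometry of this elliptic fibration (details in \cite{BZ2}). Without that structure, or some substitute establishing the degree-one statement, your proposal is a plan for a proof rather than a proof.
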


The proof of this theorem heavily uses the fact established in proposition \ref{eliptic} that the fiber of $X/S_3\times S_3$ over a generic point ${\cal U}$ is a disjoint union of two copies of an elliptic curve. This allows to use geometry of elliptic curves and elliptic fibrations.

Consider a second copy of algebra $B_{3,6}$ whose generators we denote by $(p_4,p_5,p_6)$ and $(q_j, j=1,\dots , 6)$. The corresponding moduli space of representations of this algebra is again identified with $X(3, 6)$.

Now consider algebra $B_{6,6}$ which is the quotient of algebra $B_{\frac16}(\Gamma_{6,6})$ with generators $p_i, i=1,\dots , 6$ and $q_j, j=1,\dots , 6$ by the two-sided ideal generated by elements $\sum p_i-1$ and $\sum q_j -1$. Let $X_{6,6}$ be the GIT moduli space of 6-dimensional representations of algebra $B_{6,6}$ where all idempotents $p_i$ and $q_j$ are represented by projectors of rank 1.

Note that the above two copies of algebra $B_{3,6}$ are mapped into algebra $B_{6,6}$ by sending generators $p_i$ to $p_i$ and $q_j$ to $q_j$. We have chosen the indices of the generators in the two copies in such a way that they agreed with the indices of the generators in algebra $B_{6,6}$. These two maps induce two maps $X(6, 6) \to X(3, 6)$. All the above maps can be combined into a commutative diagram:

\begin{equation}
\xymatrix{
& X(6,6)\ar[rd]^{\rm pr_2}\ar[ld]_{\rm pr_1}\\
X(3,6)\ar[rd]^{\sigma'\circ g} && X(3,6)\ar[ld]_{g}\\
& Y(6)
}
\end{equation}
\begin{Lemma}\label{x0}
There exists a point $x_0$ in $X(6,6)$ such that the tangent space $T_{x_0}$ at $x_0$ has dimension 4, the differentials at $x_0$ of maps ${\rm pr}_1$ and ${\rm pr}_2$ are isomorphisms of $T_{x_0}$ with the tangent spaces at the images of $x_0$, and such that the differential of the map $s: X(6,6)\to Y(6)$ induces an embedding of $T_{x_0}$ to the tangent space to $Y(6)$ at $s(x_0)$. The point $s(x_0)\in Y(6)$ corresponds to an irreducible representation of $A(6)$.
\end{Lemma}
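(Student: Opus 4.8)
The plan is to take $x_0$ to be the point corresponding to the \emph{standard} orthogonal pair. Concretely, in $V=\CC^6$ I represent the idempotents $p_i$ by the coordinate rank-one projectors $E_{ii}$, $i=1,\dots,6$, and the $q_j$ by $\Phi E_{jj}\Phi^{-1}$, where $\Phi$ is the $6\times 6$ Fourier matrix, $\Phi_{kl}=\omega^{kl}$ with $\omega$ a primitive sixth root of unity and $(\Phi^{-1})_{kl}=\frac16\omega^{-kl}$. First I would check that this is a legitimate closed point of $X(6,6)$: the $p_i$ form a complete system of orthogonal rank-one idempotents, and so do the $q_j$ (since $\sum_j E_{jj}=1$ conjugates to $\sum_j q_j=1$ and $E_{jj}E_{j'j'}=0$ conjugates to $q_jq_{j'}=0$), while $\Tr(p_iq_j)=\Phi_{ij}(\Phi^{-1})_{ji}=\frac16$, which is exactly the algebraic unbiasedness relation. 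Hence all defining relations of $B_{6,6}$ hold at $x_0$.

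The heart of the argument is the computation $\dim T_{x_0}X(6,6)=4$. I would describe $T_{x_0}$ as the space of first-order deformations $(\dot p_i,\dot q_j)$ subject to: keeping each generator a rank-one idempotent, $p_i\dot p_i+\dot p_ip_i=\dot p_i$ (and likewise for the $q_j$); preserving orthogonality and completeness, $p_i\dot p_{i'}+\dot p_ip_{i'}=0$ for $i\ne i'$ and $\sum_i\dot p_i=0$ (and likewise for the $q_j$); and the linearized unbiasedness $\Tr(\dot p_iq_j)+\Tr(p_i\dot q_j)=0$; all taken modulo the infinitesimal conjugation action $\dot p_i=[\xi,p_i]$, $\dot q_j=[\xi,q_j]$, $\xi\in gl(6)$. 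Evaluated at the explicit matrices above this is a finite linear-algebra problem, and it is the tangent-level version of the \emph{defect} of the Fourier matrix $\Phi$; for $\Phi=F_6$ the reduction yields a $4$-dimensional solution space.

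For the tangent maps I would use the factorization $s=g\circ\mathrm{pr}_2=\sigma'\circ g\circ\mathrm{pr}_1$ read off from the commutative diagram, where $s(x_0)$ is the $A(6)$-representation with $P=p_4+p_5+p_6$ (a rank-three projector) and the $q_j$ as above. I would first prove that $ds_{x_0}\colon T_{x_0}X(6,6)\to T_{s(x_0)}Y(6)$ is injective by a direct linear computation at $x_0$; since $Y(6)$ is smooth of dimension $8$ at the irreducible representation $s(x_0)$ (by the Crawley--Boevey result quoted in \ref{Morita}), this realizes $T_{x_0}$ as a $4$-dimensional subspace of the $8$-dimensional $T_{s(x_0)}Y(6)$, i.e.\ an embedding. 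Injectivity of $ds_{x_0}$ forces injectivity of $d(\mathrm{pr}_1)_{x_0}$ and $d(\mathrm{pr}_2)_{x_0}$. Finally, a parallel tangent-space computation shows that $\mathrm{pr}_1(x_0)$ and $\mathrm{pr}_2(x_0)$ are smooth points of the irreducible $4$-dimensional variety $X(3,6)$ (Theorem \ref{irrx}) with $4$-dimensional tangent spaces; an injective linear map between $4$-dimensional spaces being an isomorphism, $d(\mathrm{pr}_1)_{x_0}$ and $d(\mathrm{pr}_2)_{x_0}$ are isomorphisms onto these tangent spaces.

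It remains to see that $s(x_0)$ is irreducible as an $A(6)$-representation. Since the $q_j$ are commuting rank-one projectors with $\sum_j q_j=1$, any $A(6)$-invariant subspace $W\subseteq V$ is a sum of a subfamily of their images, i.e.\ a coordinate subspace in the Fourier basis, $W=\Phi\langle e_j:j\in J\rangle$. Invariance under the standard block projector $P=E_{44}+E_{55}+E_{66}$ then forces $W=(W\cap\langle e_1,e_2,e_3\rangle)\oplus(W\cap\langle e_4,e_5,e_6\rangle)$; but each Fourier basis vector $\Phi e_j$ has all six standard coordinates nonzero, so it lies in neither standard coordinate $3$-space, and a direct check for the explicit Fourier matrix shows the only such $P$-invariant subspaces are $0$ and $V$. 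Hence $s(x_0)$ is irreducible. The main obstacle is the exact tangent-space count of the second step: one must pin the defect of $F_6$ down to $4$ rather than merely bound it, and it is precisely here that the special arithmetic of $6$ enters and a naive parameter count fails to give the sharp answer. Once $\dim T_{x_0}=4$ is in hand, the injectivity and isomorphism statements reduce to explicit linear algebra at the single point $x_0$.
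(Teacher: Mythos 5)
Your overall strategy coincides with the paper's: take $x_0$ to be the standard pair given by the Fourier matrix (formula (\ref{matrA}) with $n=6$), get $\dim T_{x_0}=4$ from the defect of $F_6$ as in \cite{TZ}, and then verify the statements about the differentials and the irreducibility of $s(x_0)$ by direct linear algebra at this one point. The tangent-space count and the irreducibility sketch are fine in outline (though the irreducibility check genuinely requires running through all subsets $J$ with $1\le |J|\le 5$, not just the observation that single Fourier vectors have full support).

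There is, however, one concrete discrepancy that I would treat as a genuine gap. The paper does \emph{not} use the standard grouping of the six projectors into $\{1,2,3\}$ and $\{4,5,6\}$: before defining the projections $\mathrm{pr}_1$, $\mathrm{pr}_2$ it exchanges the 3rd and 4th columns of the Fourier matrix, i.e.\ regroups the projectors as $\{1,2,4\}$ and $\{3,5,6\}$, and explicitly says that the statements of the lemma hold ``for this choice of $x_0$ and projections.'' The tangent space $T_{x_0}X(6,6)$ does not depend on the grouping, so the only reason to perform the exchange is that the remaining statements (injectivity of $ds$, the isomorphism property of $d(\mathrm{pr}_i)$, or irreducibility of $s(x_0)$) are sensitive to it. Your choice is exactly the unexchanged one, and it carries a structural degeneracy that the regrouped choice does not: the shift $C:e_i\mapsto e_{i+3}$ fixes every Fourier projector $q_j=\Phi E_{jj}\Phi^{-1}$ (each $\Phi e_j$ is a $C$-eigenvector) while conjugating $p_i=E_{ii}$ to $p_{i+3}$. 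Hence for your grouping $\mathrm{pr}_1(x_0)$ and $\mathrm{pr}_2(x_0)$ are isomorphic points of $X(3,6)$ and $s(x_0)$ is a fixed point of the involution $\sigma'$ on $Y(6)$. This is precisely the kind of extra symmetry that can make the point $\mathrm{pr}_i(x_0)$ singular on $X(3,6)$ or kill injectivity of $dg$ at $s(x_0)$, and it is absent after the column exchange. Since your argument defers all of these verifications to an unperformed ``direct linear computation,'' and the paper signals that the computation does not come out right for the grouping you chose, you need either to carry out the computation and show your grouping works after all, or to adopt the paper's regrouping before defining the projections.
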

\begin{proof}Recall that the {\em standard pair} (see \cite{KKU1}) of Cartan subalgebras in $sl(n, {\mathbb C})$ consists of the diagonal Cartan subalgebra $H_0$ in a fixed basis $\{e_i\}$ and the subalgebra $H_1$ which is linearly spanned by $(P,\dots, P^{n-1})$, where $P$ is the operator of the cyclic permutation of the basis vectors $e_i\mapsto e_{i+1/{\rm mod}n}$.

The transition matrix $A$ from basis $\{e_i\}$ to the basis $\{f_j\}$ related to the second Cartan subalgebra has the following coefficients:
\begin{equation}\label{matrA}
A = \{a_{ij}=\frac{1}{\sqrt {n}}\epsilon ^{(i-1)(j-1)}\},i,j = 1,...,n
\end{equation}
where $\epsilon$ is a primitive root $\epsilon ^n=1$.


One can calculate the tangent space to $X_{6,6}$ at the point corresponding to the standard pair and check that it has dimension 4 (cf. \cite{TZ}).

Let us exchange the 3-rd and the 4-th columns of the matrix $A$. This corresponds to reordering of projectors $p_i$'s, thus changing the projections $X(6,6)\to X(3,6)$.
It is a direct check to show that all statements of the lemma are satisfied for this choice of $x_0$ and projections.



\end{proof}

\begin{Theorem}\label{x66}
The induced morphism $X(6,6)\to X(3,6)\times_{Y(6)}X(3,6)$ establishes a one-to-one correspondence between the set of irreducible components of $X(6,6)$ and $X(3,6)\times_{Y(6)}X(3,6)$ of dimension greater than or equal 4 and birational isomorphisms between corresponding components.
\end{Theorem}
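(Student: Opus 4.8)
The plan is to treat Proposition \ref{xnn0} as the engine and to reduce the statement to a single dimension estimate on the locus of degenerate representations. Specializing Proposition \ref{xnn0} to $n=6$, $k=3$ (so that the two maps $X(3,6)\to Y(6)$ are exactly $g$ and $\sigma'\circ g$, the discrepancy $P\mapsto 1-P$ coming from the relation $\sum_{i=1}^6 p_i=1$ in $B_{6,6}$), the induced morphism
$$
\Phi: X(6,6)\longrightarrow X(3,6)\times_{Y(6)}X(3,6)
$$
restricts to an isomorphism over the open locus $Y(6)_o$ of irreducible $A(6)$-representations; write $X(6,6)_o$ and $\bigl(X(3,6)\times_{Y(6)}X(3,6)\bigr)_o$ for the corresponding open subsets. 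Thus $\Phi$ is already completely understood away from the closed ``bad'' locus sitting over $Y(6)\setminus Y(6)_o$.

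Next I would reduce the theorem to the following claim: no irreducible component of dimension $\geq 4$, on either side, is contained in the bad locus; equivalently, the generic representation parameterized by such a component restricts to an irreducible $A(6)$-representation. Granting this, every component $W$ of $X(6,6)$ of dimension $\geq 4$ meets $X(6,6)_o$ in a dense open subset, which $\Phi$ carries isomorphically onto a dense open subset of its image; the closure of that image is then a component of dimension $\geq 4$ of the fiber product, and running the argument backwards shows the assignment is a bijection on components of dimension $\geq 4$. Since on these dense open pieces $\Phi$ agrees with the isomorphism of Proposition \ref{xnn0}, corresponding components are automatically birationally identified. All of this is formal once the claim is in hand.

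The hard part is the claim, i.e. bounding the bad locus. Here I would use that $Y(6)$ is irreducible of dimension $8$ and carries a dense open set of irreducible (equivalently simple) representations, which follows from the Morita equivalence of Proposition \ref{morita} with a deformed preprojective algebra together with the results of Crawley-Boevey, so that $Y(6)\setminus Y(6)_o$ is a proper closed subset. The delicate point is to control the behavior of the two projections and of $s$ over this subset. By Theorem \ref{birx} the map $g$ is generically finite, of degree $|S_3|$, onto its irreducible $4$-dimensional image $Z=g(X(3,6))\subset Y(6)$, so that $X(6,6)$ maps into $Z\cap\sigma'(Z)$; I would stratify $Y(6)\setminus Y(6)_o$ by the decomposition type of the reducible representation and bound, stratum by stratum, the dimension of the preimage in $X(6,6)$ and in the fiber product, the danger being precisely the sublocus where the representation acquires extra automorphisms and the GIT fibers jump. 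Showing that every such contribution has dimension strictly below $4$ is, I expect, the main obstacle of the whole argument.

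Finally, Lemma \ref{x0} anchors the correspondence and shows it is nonempty: since $s(x_0)$ is an irreducible $A(6)$-representation, the point $x_0$ lies in $X(6,6)_o$, and its $4$-dimensional tangent space, together with the fact that the differentials of both projections are isomorphisms there, exhibits at least one component of dimension exactly $4$ on each side in honest correspondence, in complete parallel with the way $X(3,6)$ sits inside $X\times_Y X$ in Theorem \ref{irrx}. In particular the standard pair's component is among those being matched, which is what is ultimately needed for the existence of the $4$-dimensional family.
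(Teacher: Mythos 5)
Your proposal follows essentially the same route as the paper: the paper's (deferred to \cite{BZ2}) argument is precisely to use the isomorphism over the good locus and then show that the locus where the fiber of $X(6,6)\to X(3,6)\times_{Y(6)}X(3,6)$ is not a single point has dimension less than $4$, which is your ``bad locus'' dimension bound. Like the paper's sketch, you leave that key estimate unproved, so your write-up matches the paper at the same level of detail.
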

The proof in {\cite{BZ2}} is based on calculation of the locus of points in $X(3,6)\times_{Y(6)}X(3,6)$ which has fiber for $X(6,6)\to X(3,6)\times_{Y(6)}X(3,6)$ different from just one point and showing that it has dimension less than 4.

\section{A 4-dimensional component in $X(6,6)$}

\subsection{Invariance of the image under an involution}
The main technical result that implies the existence of a 4-dimensional component in $X(6,6)$ is the following statement of independent interest.

\begin{Theorem}\label{x36sigma}
The image of $X(3,6)$ under map $g:X(3,6)\to Y(6)$ has a non-empty Zariski subset which is invariant under involution $\sigma '$.
\end{Theorem}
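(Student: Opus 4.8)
The plan is to produce the invariant Zariski-open subset by exhibiting a suitable anti-holomorphic or birational symmetry between the two factorizations of $X(3,6)$ and tracking how it interacts with the involution $\sigma'$ on $Y(6)$. The natural candidate for the relevant symmetry on $X(3,6)$ comes from diagram (\ref{xxxy}): the two maps $p_1,p_2:X(3,6)\to X$ together with $f,\sigma\circ f:X\to Y$ realize $X(3,6)$ inside the fiber product $X\times_Y X$ (Theorem \ref{irrx}), and the involution $\sigma$ on $A_3$ given by $P\mapsto 1-P$ is precisely the source of the map $\sigma\circ f$ on one branch. First I would make explicit the relationship between $\sigma'$ on $Y(6)$ and $\sigma$ on $Y$: the homomorphism $A(6)\to B_{3,6}$ sending $P\mapsto\sum p_i$ factors the decomposition $\sum q_j=1$ into two triples, and $\sigma':P\mapsto 1-P$ restricts on each $A_3$-block to the same rule $P\mapsto 1-P$ that defines $\sigma$. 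So $\sigma'$ is, on the nose, the involution induced by replacing the projector $P$ by its orthogonal complement $1-P$.

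Next I would use the elliptic-fibration geometry. By Proposition \ref{eliptic}, after passing to $X'=X/(S_3\times S_3)$ the fiber over a generic $u\in\mathcal U$ is two copies of an elliptic curve, and $X'\to Y'$ maps each copy isomorphically onto the single elliptic fiber of $Y'\to\mathcal U$. The crucial structural input is that $\sigma'$ descends to an involution of $Y(6)$ that preserves the map $\Theta$-type fibration over $\mathcal U$ (this is why $u_1,u_2,u_3$, built from traces of $Pq_iPq_j$, are the relevant coordinates: the identity (\ref{identity pq}) of Proposition \ref{propiden} says exactly that the $u_3$-datum is symmetric under exchanging the roles of $P$ and $Q$, i.e. under $\sigma'$-type operations, up to the elliptic-curve structure). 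I would therefore argue that $\sigma'$ acts on the elliptic fibers as an automorphism fixing the base $\mathcal U$, hence as translation or inversion on each fiber. Since the image of $g$ meets the generic fiber in a translate of the image elliptic curve and $\sigma'$ is fiber-preserving, the image $g(X(3,6))$ and its $\sigma'$-translate either coincide generically or are disjoint divisorial loci; ruling out the disjoint case is what forces invariance.

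The key ingredient that promotes "compatible fibered involution" to genuine invariance is Theorem \ref{birx}: $g:X(3,6)/S_3\to Y(6)$ is a birational isomorphism onto its image, so the image is an irreducible subvariety of $Y(6)$ of dimension $4$ carrying an induced birational self-map. I would then invoke the finite-order property of automorphisms of varieties of general type (the tool flagged in the introduction for Proposition \ref{gpsg}): the composite $\sigma'|_{\overline{g(X(3,6))}}$ is a birational involution, and on the birational model $X(3,6)/S_3$ it corresponds to a concrete involution assembled from $\tau$ and $\sigma$ on the building blocks $X$ and $Y$. One checks this assembled map is a well-defined involution of $X(3,6)/S_3$ and that $g$ intertwines it with $\sigma'$; the non-empty Zariski-open locus where both $g$ and the inverse birational map are defined and where the intertwining holds is then the required $\sigma'$-invariant subset. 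The main obstacle, as I see it, is precisely this last intertwining step: showing that the fiberwise involution $\sigma'$, defined intrinsically on $Y(6)$ by $P\mapsto 1-P$, actually descends from (and agrees with) a self-map of $X(3,6)/S_3$ rather than exchanging the two disjoint elliptic components of Proposition \ref{eliptic} in an incompatible way. Controlling which of the two elliptic components is hit, and verifying that $\sigma'$ preserves that choice generically, is where the hard algebraic-geometric work — and likely the heaviest use of the explicit identity (\ref{identity pq}) — will be concentrated.
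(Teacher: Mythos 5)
You have correctly located the crux --- the theorem reduces to the intertwining relation $g\pi=\sigma'g$ (Proposition \ref{gpsg} in the paper) --- but your proposal does not close that gap; it only names it. Two specific ingredients of the paper's argument are missing. First, the involution $\pi$ on $X(3,6)/S_3$ is not ``assembled from $\tau$ and $\sigma$ on the building blocks'' in some direct way: it is obtained by combining Proposition \ref{xyxsigma} (the image of the 4-dimensional component of $X\times_YX$ in $Y\times_SY$ is generically invariant under $(\sigma,\sigma)$ --- this is where the identity (\ref{identity pq}) and the fibration over $\cal U$ actually enter) with Proposition \ref{zariskyiso} (that $X(3,6)/S_3\to Y\times_SY$ is generically an isomorphism onto its image). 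Your sketch never passes through $Y\times_SY$, and the alternative you offer --- that $\sigma'$ acts fiberwise on the elliptic fibration so the image and its $\sigma'$-translate ``either coincide generically or are disjoint,'' with the disjoint case to be ruled out --- is not an argument; ruling out that case \emph{is} the content of the theorem.

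Second, the finite-order property of birational automorphisms of varieties of general type is used in the paper for a different and more specific purpose than you propose. It is not applied to $\sigma'$ restricted to $\overline{g(X(3,6))}$; it is applied to the composite $\pi\pi'$ of the two involutions attached to two different decompositions of $\{1,\dots,6\}$ into triples (namely $((1,2,3),(4,5,6))$ and $((1,2,4),(3,5,6))$), which acts along the fibers of $X(3,6)/S_3\to{\cal Z}$, ${\cal Z}={\rm Spec}\,F[{\rm Tr}Pq_1Pq_2,{\rm Tr}Pq_5Pq_6]$; these fibers compactify to surfaces of general type. Finiteness of the order, plus the existence of a fixed point (the standard pair) where $\pi\pi'$ is the identity on the tangent space, forces $\pi=\pi'$, hence $S_6$-equivariance of $\pi$. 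Only then does the product embedding $\psi:Y(6)\to\prod(Y\times_SY)$ over all decompositions, for which $\sigma''\psi=\psi\sigma'$, yield $g\pi=\sigma'g$. Without the $S_6$-equivariance step your intertwining claim has no mechanism behind it, so the proposal as written does not prove Theorem \ref{x36sigma}.
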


We describe the main steps of the proof of theorem \ref{x36sigma}.

According to theorem \ref{irrx} variety $X(3, 6)$ is irreducible and is embedded birationally onto the only 4-dimensional irreducible component of $X\times_YX$. Consider the map $h:X\times_YX\to Y\times_SY$.
\begin{Proposition}\label{xyxsigma}
The image under $h$ of the 4-dimensional irreducible component of $X\times_YX$ has a non-empty Zariski open subset which is invariant under involution $(\sigma , \sigma )$.
\end{Proposition}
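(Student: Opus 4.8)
The plan is to understand the geometry of the maps $f,\sigma\circ f:X\to Y$ and the involution $\sigma$ on $Y$ well enough to track how the fiber product $X\times_Y X$ sits inside $Y\times_S Y$ and how $(\sigma,\sigma)$ acts on it. Recall from diagram (\ref{xyyd}) and (\ref{commxyyud}) that $X$ embeds into $Y\times_{\cal U} Y$ via $(f\circ\tau, f)$, and that $Y\times_{\cal U} Y$ is a divisor in $Y\times_S Y$. Since $X\times_Y X$ is built from two copies of $X$ glued along the map $f$ (on one side) and $\sigma\circ f$ (on the other), a point of the $4$-dimensional component is essentially a quadruple of copies of $Y$-data sharing the appropriate $Y$- and $\cal U$-coordinates; applying $h$ records the four images in $Y$, and the involution $(\sigma,\sigma)$ acts by simultaneously replacing $P$ with $1-P$ in the two $Y$-factors.

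First I would pin down the image $h(X\times_Y X)$ as a subvariety of $Y\times_S Y$ and compute its dimension, confirming it is $4$-dimensional; by Theorem \ref{irrx} this component is birational to $X(3,6)$, which is irreducible of dimension $4$, so it suffices to work birationally and it is enough to produce a single point (hence a dense open subset, by irreducibility and continuity of the group action) whose image is fixed by $(\sigma,\sigma)$. The natural candidate is the image of the distinguished point $x_0$ furnished by Lemma \ref{x0}, coming from the standard pair of Cartan subalgebras: the standard pair is maximally symmetric, and the relation $\sigma:P\mapsto 1-P$ corresponds to a symmetry that the Fourier matrix (\ref{matrA}) realizes concretely. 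I expect that at the standard pair the two $Y$-coordinates are individually $\sigma$-invariant up to the natural $S_3$-action, so that $(\sigma,\sigma)$ fixes $h(x_0)$.

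The key structural input will be Proposition \ref{eliptic}: after quotienting by $S_3\times S_3$, the fiber of $X'\to{\cal U}$ over a generic $u$ is two copies of an elliptic curve mapping isomorphically onto the single elliptic fiber of $Y'\to{\cal U}$. This lets me transport the question to the elliptic fibers: the involution $\sigma'$ (equivalently $\sigma$ on each $Y$-factor) acts on the elliptic fiber of $Y'$, and I would argue that it preserves the fiber and that its action is compatible with the two-component structure of the $X'$-fiber, so that the image of $X'\times_{Y'}X'$ in $Y'\times_{\cal U}Y'$ meets the fixed locus of $(\sigma,\sigma)$ in a set of the correct dimension. Because $(\sigma,\sigma)$ is an involution and the $4$-dimensional component is irreducible, showing its image is not contained in the complement of the fixed locus — i.e. exhibiting one fixed point such as $h(x_0)$ — forces a dense open invariant subset, since the preimage of the fixed locus is closed and, if it contains a nonempty open set, must be everything up to lower-dimensional discrepancy.

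The main obstacle will be controlling the interplay between the involution $\sigma$ and the $S_3$-quotients. The involution $(\sigma,\sigma)$ is defined on $Y\times_S Y$ (before quotienting), whereas the clean elliptic-fibration picture of Proposition \ref{eliptic} lives on the quotients $X'$ and $Y'$; I must verify that $\sigma$ descends compatibly and that the two elliptic components of the $X'$-fiber are either preserved or swapped by the induced involution in a way I can predict. Concretely, the hard part is to check that $\sigma'$ on the elliptic fiber of $Y'$ lifts to an automorphism of the two-component fiber of $X'$ that stabilizes the image of $X\times_Y X$ rather than moving it off itself — this is exactly the kind of compatibility of involutions on elliptic fibers that the introduction flags as the delicate, algebraic-geometry-heavy core of the argument, and it is where I would expect to invest the real work, likely leaning on the explicit identity (\ref{identity pq}) to guarantee that the $\cal U$-coordinates are preserved by the swap $P\leftrightarrow 1-P$.
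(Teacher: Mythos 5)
The paper states Proposition \ref{xyxsigma} without proof (it is one of the ``main steps'' whose details are deferred to \cite{BZ2}), so your proposal has to stand on its own, and as written it does not. The central reduction is invalid: you claim that because the $4$-dimensional component $Z$ of $X\times_YX$ is irreducible, it suffices to exhibit a single point of $W=h(Z)$ fixed by $(\sigma,\sigma)$. But the proposition asserts \emph{invariance} of (an open subset of) $W$, i.e.\ that $(\sigma,\sigma)$ carries a generic point of $W$ to another point of $W$ --- equivalently, that the two $4$-dimensional irreducible subvarieties $\overline{W}$ and $(\sigma,\sigma)(\overline{W})$ of the $5$-dimensional $Y\times_{\cal U}Y$ have the same closure. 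A single common point of two $4$-folds inside a $5$-fold implies nothing of the sort, and your appeal to ``the preimage of the fixed locus is closed, so if it contains a nonempty open set it must be everything'' only restates what you would need (an open set of \emph{fixed} points, which is moreover far stronger than invariance) without providing it: one fixed point is not an open set. Throughout, you conflate ``invariant under the involution'' with ``pointwise fixed by the involution''; these are different statements, and only the former is claimed or needed.

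The part of your plan that does point at the actual content --- descending to $X'$ and $Y'$ and using the elliptic fibration of Proposition \ref{eliptic} to compare the action of $\sigma$ along the fibers of $Y'\to{\cal U}$ with the position of the image of $X\times_YX$ --- is left entirely at the level of ``I would argue that\dots'', and that is precisely the nontrivial step: one must show that over a generic point of ${\cal U}$ the involution $(\sigma,\sigma)$ maps the fiber of $W$ inside the product of elliptic curves onto itself, which requires identifying $\sigma$ as a concrete automorphism of the elliptic fiber and computing its effect on the curve cut out by $W$. Two smaller inaccuracies compound the problem: identity (\ref{identity pq}) governs the exchange $p_i\leftrightarrow q_j$ (the involution $\tau$), not $P\mapsto 1-P$, so it cannot play the role you assign to it; and the ${\cal U}$-coordinates are not all preserved by $P\mapsto 1-P$ (using $q_iq_j=0$ one checks that $u_1$ and $u_3$ are preserved while $u_2$ changes sign), so even the compatibility of $(\sigma,\sigma)$ with the fibration over ${\cal U}$ is something to be established rather than assumed.
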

The map $X(3,6)\to Y\times_SY$ factors through the quotient map $X(3,6)\to X(3, 6)/S_3$, where the action of $S_3$ on $X(3,6)$ is induced by permutations of $p_i, i=1,2,3$.
\begin{Proposition}\label{zariskyiso}The induced morphism $X(3, 6)/S_3\to Y\times_SY$ isomorphically maps a Zariski open subset in $X(3, 6)/S_3$ into $Y\times_SY$.
\end{Proposition}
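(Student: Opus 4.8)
The plan is to prove that the morphism $\psi:X(3,6)/S_3\to Y\times_S Y$ is birational onto its image, and then invoke the standard fact that in characteristic zero a birational morphism of irreducible varieties restricts to an isomorphism between suitable dense Zariski open subsets — which is exactly the assertion of the proposition. By Theorem \ref{irrx} the source is irreducible of dimension $4$, and by Proposition \ref{xyxsigma} its image $Z$ (the image under $h$ of the $4$-dimensional component of $X\times_Y X$) is irreducible; since $\psi$ is generically finite (the fibres are bounded by two, see below), $\dim Z=4$ and birationality is equivalent to generic injectivity. Everything thus reduces to analysing the fibre of $\psi$.

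To describe the fibre, unwind the maps. A point of $X(3,6)$ is a $6$-dimensional representation with mutually orthogonal rank-$1$ projectors $p_1,p_2,p_3$ (so $P:=p_1+p_2+p_3$ has rank $3$) and $q_1,\dots,q_6$ (orthogonal, summing to $1$), subject to $\Tr p_iq_j=\tfrac16$. Up to replacing $P$ by $1-P$ in one factor, $\psi$ sends this point to $((P;q_1,q_2,q_3),(P;q_4,q_5,q_6))$, so it records $P$ and all six $q_j$ but depends on the $p_i$ only through their sum $P$; this is exactly why $\psi$ factors through the $S_3$-quotient. Consequently the fibre of $\psi$ over $\psi(x)$ is the set of unordered triples of rank-$1$ projectors summing to $P$ — equivalently, the direct-sum decompositions of the $3$-space $\mathrm{im}(P)$ into three lines — that are algebraically unbiased to all six $q_j$. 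Generic injectivity is therefore precisely the statement that this unbiased decomposition of $\mathrm{im}(P)$ is generically unique.

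Next I would bound the fibre and bring in the elliptic geometry. The map $f:X\to Y$ is invariant under the $S_3$ permuting the $p_i$, hence descends to $\bar f:X/S_3\to Y$; as $f$ has degree $12$ and the permutation action is generically free, $\bar f$ has degree $2$, and this $2$ is exactly the pair of elliptic components of Proposition \ref{eliptic}. Thus over a generic $(P;q_1,q_2,q_3)$ there are exactly two triples unbiased to $q_1,q_2,q_3$, and likewise two unbiased to $q_4,q_5,q_6$; the $\psi$-fibre is their intersection, which always contains the original triple and so has one or two elements. Moreover $\psi$ factors through $Y\times_{\U}Y$: the two factors have the same image in $S$ (the pairs $(P,q_1+q_2+q_3)$ and $(P,q_4+q_5+q_6)=(P,1-(q_1+q_2+q_3))$ have equal invariants $u_1,u_2$), and they have equal $u_3$ because applying Proposition \ref{propiden} to the two $(3,3)$-subconfigurations rewrites $u_3$ through $Q'=q_1+q_2+q_3$ and $1-Q'$, while the orthogonality $p_ip_j=0$ $(i\neq j)$ gives $\Tr((1-Q')p_i(1-Q')p_j)=\Tr(Q'p_iQ'p_j)$ for $i\neq j$. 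Hence $y_1$ and $y_2$ lie over one point $u\in\U$ and the fibre may be analysed fibrewise over $\U$ using Proposition \ref{eliptic}.

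The \emph{main obstacle} is to exclude the second intersection point generically: to show that the triple $B\neq A=\{p_i\}$ unbiased to $q_1,q_2,q_3$ is generically \emph{not} unbiased to $q_4,q_5,q_6$. Over $u\in\U$ the triples $A,B$ are the two preimages of $y_1$ in the two elliptic components $E_1\sqcup E_2$ of the fibre of $X/(S_3\times S_3)\to\U$, interchanged by the deck involution of $\bar f$ (the component-swapping involution of Proposition \ref{eliptic}); the coincidence that makes the fibre have two points would force this involution, read off from the $(q_1,q_2,q_3)$-data, to agree with the analogous one read off from the $(q_4,q_5,q_6)$-data. I expect this to be the crux, and I would settle it through the interplay of these two involutions on the elliptic fibre — expressing both in the coordinates of Proposition \ref{eliptic} and showing they differ generically because they are governed by the distinct points $Q'$ and $1-Q'$. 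To conclude rigorously, note that since $X(3,6)\to Y$ (the $y_1$-factor) carries the tautological section picking $A$, the double cover $\bar f$ is split over the image of $X(3,6)$, so $B$ is a well-defined rational map on $X(3,6)$ and ``$B$ is unbiased to $q_4,q_5,q_6$'' is a closed condition on a dense open. Its locus is either everything or proper; exhibiting one point — conveniently the standard pair underlying $x_0$ of Lemma \ref{x0}, where $\bar f$ is unramified and $B$ is computed explicitly from the Fourier matrix — at which $B$ fails unbiasedness then forces, by irreducibility of $X(3,6)$ (Theorem \ref{irrx}), generic injectivity and completes the proof.
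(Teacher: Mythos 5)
First, a caveat: this survey does not actually prove Proposition \ref{zariskyiso} --- it is one of the steps deferred to \cite{BZ2} --- so there is no in-paper argument to compare yours against; I can only judge the proposal on its own terms. It has a genuine structural gap at the very first step, namely the description of the fibre. A closed point of $Y\times_S Y$ is a \emph{pair of isomorphism classes} of $A_3$-representations, $[(P;q_1,q_2,q_3)]$ and $[(P;q_4,q_5,q_6)]$, taken separately; it does not determine the joint configuration $(P;q_1,\dots,q_6)$ inside one $6$-dimensional space. (This is precisely the failure of fibre coproducts of algebras to induce fibre products of moduli spaces that the paper warns about, and it is why Proposition \ref{xnn0} is formulated over $Y(n)_o$, the moduli of the full intersection algebra $A(n)$, rather than over a smaller invariant such as $S$.) Consequently the fibre of $\psi$ over $\psi(x)$ is not just ``the unordered triples summing to $P$ unbiased to all six $q_j$'': it also records all inequivalent ways of gluing the two $A_3$-representations into a single space subject to $\sum q_j=1$, together with a compatible decomposition of $P$. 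Concretely, $\psi$ factors as $X(3,6)/S_3\to Y(6)\to Y\times_S Y$, the second map has $2$-dimensional generic fibres (since $\dim Y(6)=8$ while $\dim Y\times_S Y=6$), and your argument never excludes the possibility that the $4$-dimensional image of $X(3,6)/S_3$ in $Y(6)$ meets such a fibre in more than one point. What you actually analyse is the fibre of $X(3,6)/S_3\to Y(6)$, i.e.\ you are re-deriving (a version of) Theorem \ref{birx}; Proposition \ref{zariskyiso} is strictly stronger, and the extra content is exactly the part you omit.

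Even for the part you do address, the decisive step --- that the second decomposition $B$ of $P$ unbiased to $(q_1,q_2,q_3)$ generically fails to be unbiased to $(q_4,q_5,q_6)$ --- is left as an expectation plus an unperformed computation at the Fourier point. The reduction ``closed condition on an irreducible variety, so one counterexample suffices'' is sound, and your observation that the two factors lie over a single point of ${\cal U}$ (via Proposition \ref{propiden} together with $\Tr((1-Q)p_i(1-Q)p_j)=\Tr(Qp_iQp_j)$ for $i\ne j$) is correct and genuinely in the spirit of the paper's elliptic-fibration technique. But without the explicit verification at the point $x_0$ of Lemma \ref{x0}, or the promised analysis of the two involutions on the elliptic fibres, this remains a plan rather than a proof. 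To close the main gap you would additionally have to show that the composite into $Y\times_S Y$ separates the gluing data, e.g.\ that the image of $X(3,6)/S_3$ in $Y(6)$ meets the generic fibre of $Y(6)\to Y\times_S Y$ in a single point --- a statement of essentially the same order of difficulty as the proposition itself.
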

Propositions \ref{xyxsigma} and \ref{zariskyiso} imply that involution $(\sigma, \sigma )$ induces an involution $\pi$ on a Zariski open subset of $X(3, 6)/S_3$.

Map $g$ allows factorization through the quotient $X(3, 6)\to X(3, 6)/S_3$, thus inducing a map $g: X(3, 6)/S_3 \to Y(6)$.

\begin{Proposition}\label{gpsg}
$g\pi = \sigma 'g$.
\end{Proposition}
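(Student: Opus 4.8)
The plan is to establish the equality of the two morphisms $g\pi$ and $\sigma' g$ from $X(3,6)/S_3$ to $Y(6)$ by reducing it to a comparison on a dense open subset and then rigidifying. By Theorem \ref{irrx} the variety $X(3,6)/S_3$ is irreducible, by Theorem \ref{birx} the map $g$ is birational onto its image $Z\subset Y(6)$, and by Propositions \ref{xyxsigma} and \ref{zariskyiso} the map $\pi$ is a birational involution of $X(3,6)/S_3$ induced by $(\sigma,\sigma)$ on $Y\times_S Y$. Hence both $g\pi$ and $\sigma' g$ are dominant onto $4$-dimensional subvarieties of $Y(6)$, and it is enough to verify $g\pi=\sigma' g$ over a Zariski open set. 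I would emphasise from the start that one must prove the identity \emph{directly}, without presupposing that $\sigma'$ preserves $Z$: indeed, once $g\pi=\sigma' g$ is known, the image $Z=g(\pi(X(3,6)/S_3))=g(X(3,6)/S_3)$ is automatically $\sigma'$-invariant, which is exactly Theorem \ref{x36sigma}.

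First I would unwind both sides on a generic representation. A point of $X(3,6)/S_3$ is the class of a configuration $(p_1,p_2,p_3;q_1,\dots,q_6)$ of rank-one projectors with $\sum_j q_j=1$ and $\Tr p_iq_j=\frac16$; the map $g$ sends it to $(\sum_i p_i;q_1,\dots,q_6)$, so $\sigma' g$ returns $(1-\sum_i p_i;q_1,\dots,q_6)$. On the other side, under the isomorphism of Proposition \ref{zariskyiso} the two substructures $(p;q_1,q_2,q_3)$ and $(p;q_4,q_5,q_6)$ correspond to the pair of $A_3$-representations sharing the common rank-three projector $P=\sum_i p_i$, and the involution $(\sigma,\sigma)$ replaces $P$ by $1-P$ in both factors while fixing all six $q_j$. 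The whole statement therefore reduces to one geometric fact: transporting $\big((1-P;q_1,q_2,q_3),(1-P;q_4,q_5,q_6)\big)$ back through Proposition \ref{zariskyiso} yields a genuine configuration $(\tilde p_1,\tilde p_2,\tilde p_3;q_1,\dots,q_6)$ in which the $\tilde p_i$ form a single rank-one decomposition of $1-P$ unbiased with \emph{all} six $q_j$; granting this, $g\pi$ returns $(1-P;q_1,\dots,q_6)=\sigma' g$.

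The existence and uniqueness of this common decomposition $\tilde p$ is the substance, and here I would work over ${\cal U}$ and use the elliptic fibration of Proposition \ref{eliptic}. Everything is relative over ${\cal U}$ — the compatibility of the two $\Theta$-images being precisely identity (\ref{identity pq}) of Proposition \ref{propiden} — and over a generic point of ${\cal U}$ the relevant fibres are elliptic curves on which $\pi$ and $\sigma'$ act by explicit automorphisms. The matching of the two triples selected by the two substructures amounts to these two automorphisms of the generic elliptic fibre coinciding. The a priori danger is that $g\pi$ and $\sigma' g$ agree only up to a nontrivial automorphism of the fibration that varies over ${\cal U}$. To exclude this I would realise the discrepancy as an algebraic family, parameterised by the irreducible base ${\cal U}$, of automorphisms of a variety of general type attached to the construction; finiteness of the automorphism group of a variety of general type forces the family to be constant. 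Evaluating at the image in $X(3,6)/S_3$ of the point $x_0$ of Lemma \ref{x0}, where the standard pair makes $1-P$ manifestly split by the complementary Fourier projectors and $g\pi=\sigma' g$ is a direct check, pins the constant discrepancy to the identity.

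The step I expect to be the main obstacle is precisely this rigidification: identifying the general-type variety whose finiteness of automorphisms kills the discrepancy, exhibiting the discrepancy as a connected family of its automorphisms, and verifying that $\pi$ and $\sigma'$ induce the \emph{same} automorphism of the generic elliptic fibre rather than two that differ by a nonzero translation. Setting up the clean fibrewise picture over ${\cal U}$ and guaranteeing that the decompositions $\tilde p$ produced from the two substructures coincide carries essentially all the weight of the argument; the generic unwinding in the second step is then routine bookkeeping.
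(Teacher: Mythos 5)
Your reduction in the second paragraph is not ``routine bookkeeping''; it is where the proof actually lives, and as written it has a gap. Transporting $\bigl((1-P;q_1,q_2,q_3),(1-P;q_4,q_5,q_6)\bigr)$ back through Proposition \ref{zariskyiso} only tells you that $g\pi(x)$ and $\sigma'g(x)$ have the same image under $Y(6)\to Y\times_SY$. That map has $2$-dimensional fibers ($\dim Y(6)=8$, $\dim Y\times_SY=6$), so this does not determine the point of $Y(6)$: the two points could differ within the fiber, i.e.\ the six $q_j$ need not sit the same way relative to $1-P$ as a single $A(6)$-representation. You cannot appeal to injectivity of $Y\times_SY$-coordinates on the image $Z=g(X(3,6)/S_3)$ either, because $\sigma'g(x)\in Z$ is precisely Theorem \ref{x36sigma}, which you correctly refuse to presuppose. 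The paper's resolution is to use \emph{all} $3+3$ decompositions of $\{1,\dots,6\}$ at once: the product map $\psi:Y(6)\to\prod(Y\times_SY)$ \emph{is} birationally an embedding, and $\sigma''\psi=\psi\sigma'$ by construction; the identity $g\pi=\sigma'g$ then follows once one knows that $\pi$ is compatible with the maps attached to every decomposition, i.e.\ that $\pi$ commutes with the $S_6$-action permuting the $q_j$. That equivariance is the real content of the proposition, and your proposal never addresses it.

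Your rigidification step is also aimed at the wrong fibration. The generic fibers of $X'\to{\cal U}$ in Proposition \ref{eliptic} are elliptic curves, whose automorphism groups (translations included) are infinite, so finiteness of birational automorphisms of varieties of general type gives you nothing there --- which is presumably why you flag this as ``the main obstacle.'' The paper instead compares the two involutions $\pi$ and $\pi'$ attached to the decompositions $((1,2,3),(4,5,6))$ and $((1,2,4),(3,5,6))$: both act along the fibers of $X(3,6)/S_3\to{\cal Z}={\rm Spec}\,F[z_1,z_2]$ with $z_1=\Tr Pq_1Pq_2$, $z_2=\Tr Pq_5Pq_6$, and the generic fiber there compactifies to a \emph{surface of general type}. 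Hence $\pi\pi'$ has finite order, and since it fixes the standard-pair point of Lemma \ref{x0} and acts as the identity on the tangent space there, it is the identity; so $\pi=\pi'$, which yields commutation with the transposition $(34)$ and hence with all of $S_6$. Your instinct to use finiteness of automorphisms of a general-type variety plus evaluation at $x_0$ is the right one, but without identifying the correct two-dimensional fibers over ${\cal Z}$ and the correct pair of involutions to compare, the argument does not close.
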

\begin{proof}
First, wet prove that the involution $\pi$ commutes with the action of $S_6$ on $X(3, 6)/S_3$ that is induced by the permutations of $q_j, j=1,\dots , 6$, in algebra $B(3, 6)$.
Consider the product $Y\times_S Y$ which is defined by the two maps $Y\to S$ that are induced by the maps $C\to A_3$ defined by $Q\mapsto q_1+q_2+q_3$ and by $Q\mapsto 1-q_1-q_2-q_3$.

Let us construct a morphism $Y(6)\to Y\times_S Y$. It corresponds to a decomposition of the set $(1,2,3,4,5,6)$ into a disjoint union of two subsets by 3 elements in each and a choice of ordering of elements in each subset. We can assign two algebra homomorphisms $A_3\to A(6)$ to this combinatorial data. The first map takes idempotents $q_j$'s of $A_3$ to $q_j$'s with indices in the first subset, ordered in the prescribed way, and similarly for the second homomorphism. Together, these homomorphisms define a morphism $Y(6)\to Y\times Y$, which is easily seen to  descend to a morphism $Y(6)\to Y\times_S Y$. When composed with $g$, this morphism gives us a morphism $X(3, 6)/S_3\to Y\times_S Y$.

We choose two particular decompositions of the set $(1,2,3,4,5,6)$ into a disjoint union of two subsets. One is $((1,2,3), (4,5,6))$ and the other one is $((1, 2,4), (3,5,6))$. As above they define us two morphisms $X(3, 6)/S_3\to Y\times_S Y$. Let us consider two functions on the variety $X(3,6)/S_3$:
$$
z_1={\rm Tr}Pq_1Pq_2,\ \ z_2={\rm Tr}Pq_5Pq_6.
$$
Let ${\cal Z}={\rm Spec}F[z_1, z_2]$. The natural morphism $X(3,6)/S_3\to {\cal Z}$ factors through both morphisms $X(3, 6)/S_3\to Y\times_S Y$. Hence we get a commutative diagram:

\begin{equation}
\xymatrix{
& X(3,6)/S_3\ar[rd]\ar[ld]\\
Y \times_S Y \ar[rd] && Y \times_S Y\ar[ld] \\
& {\cal Z}
}
\end{equation}

Involution $(\sigma , \sigma )$ acts along the fibers of both morphisms $Y\times_S Y\to {\cal Z}$. Denote by $\pi$ and $\pi '$ the involutions on $X(3, 6)/S_3$, where $\pi$ was defined above, and it is attached to one of the morphisms $X(3, 6)/S_3\to Y\times_SY$, while $\pi '$ is similarly attached to the other morphism $X(3, 6)/S_3\to Y\times_SY$. Both $\pi$ and $\pi '$ act along the fiber of the map
$X(3, 6)/S_3\to {\cal Z}$. Therefore, the product $\pi \pi '$ also acts along the fiber of the same map. The fibers of the map over a generic point are compactified to a surface of general type. There $\pi \pi '$ is a birational automorphisms of the surface of general type. The group of birational automorphisms of the variety of general type is finite (cf. \cite{HMX}). Therefore, element $\pi \pi '$ is of finite order. One can find a smooth fixed point of $\pi \pi '$ on $X(3, 6)/S_3$ such that $\pi \pi '$ acts by identity on the tangent space at this point. The point is a projection to $X(3,6)/S_3$ of the point in $X(6,6)$ corresponding to the 'standard orthogonal pair' of Cartan subalgebras in $sl(6, {\bar F})$.  Since $\pi \pi '$ is of finite order it follows that it is identity on the whole $X(3, 6)/S_3$. Therefore, $\pi = \pi '$.

This implies that $\pi$ commutes with transposition $(34)\in S_6$. Clearly, $\pi$ commutes with all elements in $S_6$ which permute inside the subsets $(1,2,3)$ and $(4,5,6)$. Together with transposition $(34)$ they generate the whole group $S_6$. Thus $\pi$ commutes with it.

Now we consider the product of as many copies of $Y\times_S Y$ as there exist decompositions of set $(1,2,3,4,5,6)$ into a disjoint union of two subsets by 3 elements in each and a choice of ordering of elements in each subset.
Taking the product of above maps for each individual copy of $Y\times_S Y$ defines a morphism $\psi :Y(6)\to \prod (Y\times_S Y)$. One can  check that this map is birationally an embedding.

Variety $\prod (Y\times_S Y)$ has an involution $\sigma ''$ defined by the action of $(\sigma , \sigma )$ on every component $Y\times_S Y$. It is obvious from the definition that $\sigma '' \psi = \psi \sigma '$. Denote $\phi = \psi g: X(3, 6)/ S_3\to \prod (Y\times_S Y)$. Since $\pi $ commutes with the action of $S_6$, it follows that $\sigma ''\phi =\phi \pi$.

As $g$ and $\phi$ are both birationally embeddings, it follows that $g\pi = \sigma 'g$.

\end{proof}
It would be nice to have a more conceptual proof for this statement.

Clearly, proposition \ref{gpsg} implies the proof of theorem \ref{x36sigma}.

\subsection{The main algebraic geometric result}

\begin{Theorem}\label{compon}
There exists a  4-dimensional irreducible component of $X(6,6)$ which contains the point $x_0$ constructed in Lemma \ref{x0}.
\end{Theorem}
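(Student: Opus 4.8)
The plan is to combine the structural comparison of Theorem \ref{x66} with the involution-invariance result of Theorem \ref{x36sigma}, using the regular base point $x_0$ from Lemma \ref{x0} as the anchor. First I would recall that by Theorem \ref{x66} the irreducible components of $X(6,6)$ of dimension $\geq 4$ are in one-to-one correspondence, via birational isomorphisms, with the components of the fiber product $X(3,6)\times_{Y(6)}X(3,6)$ of dimension $\geq 4$. So it suffices to produce a $4$-dimensional irreducible component of $X(3,6)\times_{Y(6)}X(3,6)$ whose corresponding component in $X(6,6)$ contains $x_0$. Here the two maps defining the fiber product are $g$ and $\sigma'\circ g$, so a point of the fiber product is a pair $(x,x')\in X(3,6)\times X(3,6)$ with $g(x')=\sigma' g(x)$.

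Next I would exploit Theorem \ref{x36sigma}: the image $g(X(3,6))\subset Y(6)$ has a non-empty Zariski-open subset $W$ invariant under $\sigma'$. Because $X(3,6)$ is irreducible of dimension $4$ (Theorem \ref{irrx}) and, by Theorem \ref{birx}, $g$ restricted to $X(3,6)/S_3$ is birational onto its image, the generic fiber of $g$ over its image is finite (an $S_3$-orbit). Over the $\sigma'$-invariant open set $W$, for a generic $x$ with $g(x)\in W$ we have $\sigma' g(x)\in W$ again lying in the image, so there exists $x'$ with $g(x')=\sigma' g(x)$; this produces pairs $(x,x')$ in the fiber product lying over the $4$-dimensional locus $g^{-1}(W)$. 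Since $g^{-1}(W)$ is open dense in the irreducible $4$-fold $X(3,6)$ and the second factor is determined up to finite ambiguity, the closure of this locus of pairs is an irreducible subvariety of $X(3,6)\times_{Y(6)}X(3,6)$ of dimension $4$. I would then verify it is actually a component (i.e. not contained in something larger) by the dimension bound: the fiber product sits inside $X(3,6)\times X(3,6)$ cut out by $g(x')=\sigma' g(x)$, and a dimension count using $\dim Y(6)=8$, $\dim X(3,6)=4$ shows the expected dimension is exactly $4$, so our $4$-fold is maximal.

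Finally I would check that $x_0$ lands on this component. By Lemma \ref{x0} the differentials of $\mathrm{pr}_1,\mathrm{pr}_2$ at $x_0$ are isomorphisms onto the tangent spaces of the images, $s(x_0)$ corresponds to an irreducible $A(6)$-representation, and $\dim T_{x_0}X(6,6)=4$. Irreducibility of the representation places $x_0$ in the open locus where the fiber-product comparison of Theorem \ref{x66} is an isomorphism, and the standard pair is precisely $\sigma'$-compatible (the standard Cartan pair is symmetric under $P\mapsto 1-P$ after the column swap performed in the proof of Lemma \ref{x0}), so $\mathrm{pr}_1(x_0)$ and $\mathrm{pr}_2(x_0)$ satisfy $g(\mathrm{pr}_2(x_0))=\sigma' g(\mathrm{pr}_1(x_0))$ with both images in $W$. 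Hence the image of $x_0$ in the fiber product lies on the $4$-dimensional component just constructed, and the tangent-space computation ($\dim T_{x_0}=4$) certifies that $x_0$ is a smooth point of a genuinely $4$-dimensional component rather than an embedded or lower-dimensional piece.

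I expect the main obstacle to be the last step of matching $x_0$ to the constructed component: one must ensure $x_0$ is generic enough to lie in all the relevant open dense subsets simultaneously (the invariant locus $W$, the locus where $g$ is birational, and the locus where Theorem \ref{x66} gives an isomorphism), while still being the specific, non-generic point fixed by the symmetry. The tangent-space data in Lemma \ref{x0} is the tool that bridges this gap, since it lets one conclude smoothness and correct dimensionality at $x_0$ even though $x_0$ is a special symmetric point; reconciling the genericity requirements of the construction with the special nature of the standard pair is the delicate part.
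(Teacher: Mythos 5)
Your proposal follows essentially the same route as the paper: it builds a 4-dimensional locus in $X(3,6)\times_{Y(6)}X(3,6)$ out of the $\sigma'$-invariance of the image of $g$, transfers it into $X(6,6)$ over the locus of irreducible $A(6)$-representations (Proposition \ref{xnn0}, which is what you are implicitly invoking when you say the comparison of Theorem \ref{x66} becomes an isomorphism there), and uses $\dim T_{x_0}X(6,6)=4$ from Lemma \ref{x0} to conclude that the component through $x_0$ is genuinely 4-dimensional. One local slip worth noting: the ``expected dimension'' count gives $\dim X(3,6)+\dim X(3,6)-\dim Y(6)=4+4-8=0$, not $4$ (the locus is 4-dimensional because it is generically finite over $g^{-1}(W)$, and expected dimension is in any case only a lower bound on components, so it cannot certify maximality), but this step is dispensable since, exactly as in the paper, maximality at $x_0$ follows from the tangent-space bound; likewise your irreducibility claim for the finite cover of $g^{-1}(W)$ is unjustified but unneeded --- the paper itself concedes that its analogous locus $T$ may be reducible, with all components of dimension 4.
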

\begin{proof}
Proposition \ref{gpsg} implies that the variety ${\bar T}$ which is a locus of points $({\bar x}, \pi {\bar x})$, where ${\bar x}$ runs over the set of points $X(3,6)/S_3$ such that $\pi {\bar x}$ is well defined, is a subvariety in $X(3,6)/S_3\times_{Y(6)}X(3,6)/S_3$. Let $T $ be its pre-image in $X(3,6)\times_{Y(6)}X(3,6)$. Consider the open subset $T_o\subset T$ of points which lie over the locus $Y_o$ of irreducible representations for algebra $A(6)$. According to Proposition \ref{xnn0}, the open subset $X(6,6)_o$ is isomorphic to $X(3,6)_o\times_{Y(6)_o}X(3,6)_o$. Thus $T_o$ is a subvariety in $X(6,6)_o$. Note that ${\bar T}$ is irreducible by construction, and $T$ might have several components. By construction,  ${\bar T}$ and all components of $T$ have dimension 4.

Now consider the point $x_0\in X(6,6)$ which was constructed in Lemma \ref{x0}. By the lemma, $x_0$ lies over $Y_o$, i.e. it corresponds to a point in $T_o$ under isomorphism in Proposition \ref{xnn0}. Since the tangent space to $X(6,6)$ at this point is 4 and $T_o$ is of dimension 4, it follows that $x_0$ is a smooth point on $T_o$. Hence the irreducible component of $T_o$ that contains $x_0$ is an irreducible component of $X(6,6)$.



\end{proof}

Since $X(6,6)$ can be interpreted as the moduli space of orthogonal pairs in $sl(6)$, as it was explained in section \ref{TLieb}, we have the following result.

\begin{Corollary}
There exists a 4 dimensional family of orthogonal pairs in $sl(6)$, which contains the standard pair.
\end{Corollary}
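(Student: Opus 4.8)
The plan is to deduce the Corollary from Theorem~\ref{compon} by invoking the representation-theoretic dictionary set up in Section~\ref{TLieb}. The essential content is already contained in Theorem~\ref{compon}; what remains is to translate the existence of a $4$-dimensional irreducible component of the moduli space $X(6,6)$ through $x_0$ into a statement about orthogonal pairs in $sl(6,\CC)$, and to verify that the distinguished point $x_0$ corresponds precisely to the standard pair.

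First I would recall the correspondence established in Section~\ref{TLieb}: a pair of orthogonal Cartan subalgebras in $sl(6)$ is the same datum as two maximal systems of mutually orthogonal rank~$1$ projectors $(p_i)$ and $(q_j)$ with every cross-pair algebraically unbiased, i.e. $\Tr p_iq_j=\frac16$. As explained in the text, this is exactly a $6$-dimensional representation of $B_{6,6}=B_{\frac16}(\Gamma_{6,6})/(\sum p_i-1,\sum q_j-1)$ in which each generating idempotent is presented by a rank~$1$ projector, and such representations are in one-to-one correspondence with orthogonal pairs. Hence $X(6,6)$, the GIT moduli space of such representations, \emph{is} the moduli space of orthogonal pairs in $sl(6)$, up to the natural symmetries quotiented out by the GIT construction. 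This is the only conceptual identification needed.

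Next I would invoke Theorem~\ref{compon} directly: it produces a $4$-dimensional irreducible component $Z\subset X(6,6)$ containing $x_0$. Under the identification above, the closed points of $Z$ parameterize orthogonal pairs, so $Z$ furnishes a $4$-dimensional family of them. To complete the statement, I would recall from the proof of Lemma~\ref{x0} that $x_0$ is obtained from the standard pair of Cartan subalgebras in $sl(6,\CC)$ --- the diagonal subalgebra $H_0$ and the subalgebra spanned by powers of the cyclic permutation operator, with transition matrix $A$ as in~(\ref{matrA}) --- after the harmless reordering exchanging the $3$rd and $4$th columns of $A$. Since reordering columns merely permutes the projectors $p_i$ and thus represents the same orthogonal pair, the point $x_0\in Z$ corresponds to the standard pair, which therefore lies in the family.

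I do not anticipate any genuine obstacle at this stage, as all the difficulty has been absorbed into Theorem~\ref{compon} and the Morita/homotope machinery of Section~\ref{algebraic}. The only point requiring a word of care is that the GIT quotient identifies representations up to the natural symmetry group, so that a $4$-dimensional component of $X(6,6)$ genuinely reflects a $4$-dimensional moduli of orthogonal pairs rather than being inflated or collapsed by the action; but this is guaranteed precisely because $x_0$ is a \emph{smooth} point of the component with $4$-dimensional tangent space (Lemma~\ref{x0}), so the family has the expected dimension in a Zariski neighbourhood of the standard pair.
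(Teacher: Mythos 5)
Your proposal is correct and follows essentially the same route as the paper, which deduces the Corollary in one line from Theorem~\ref{compon} via the identification of $X(6,6)$ with the moduli space of orthogonal pairs explained in Section~\ref{TLieb}. Your additional remarks on the column reordering of $A$ and on smoothness of $x_0$ are accurate elaborations of points the paper leaves implicit.
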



It might be instructive to reformulate Proposition \ref{gpsg} in terms of elementary Linear Algebra.

\begin{Proposition}
Let $\cal W$ be the irreducible variety parameterizing $6\times 6$-matrices $P$ of rank 3 with $\frac 12$'s on the diagonal which satisfy $P^2=P$ and admit  a decomposition into three matrices $p_i$ of rank 1 with $\frac 16$ on the diagonal (which implies $p_i^2=p_i$):
$$
P=p_1+p_2+p_3.
$$
Then, for almost all $P\in {\cal W}$, matrix $1-P$ is also in ${\cal W}$.
\end{Proposition}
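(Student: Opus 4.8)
The plan is to recognize ${\cal W}$ as the image of the map $g:X(3,6)/S_3\to Y(6)$, realized in a fixed gauge, and then to read the assertion off directly from Proposition \ref{gpsg}. The point is that ${\cal W}$ does not mention the idempotents $q_j$ at all: they are hidden in the two diagonal normalizations. So first I would make this dictionary explicit. Take the coordinate rank-$1$ projectors $q_j=E_{jj}$, $j=1,\dots,6$; they form a complete orthogonal system with $\sum_j q_j=1$. Then the requirement that $P$ have $\frac12$ on the diagonal is exactly $q_jPq_j=\frac12 q_j$, a rank-$1$ matrix with $\frac16$ on the diagonal has trace $1$ and is therefore automatically idempotent (which is the parenthetical remark in the statement), and the requirement that each $p_i$ have $\frac16$ on the diagonal is precisely $\Tr(p_iq_j)=\frac16$, i.e.\ algebraic unbiasedness against the coordinate basis. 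Hence a matrix $P\in{\cal W}$ together with a decomposition $P=p_1+p_2+p_3$ is nothing but a representation of $B_{3,6}$ in the gauge $q_j=E_{jj}$, and forgetting the ordering of the $p_i$ presents it as a point $\bar x\in X(3,6)/S_3$ with $g(\bar x)$ the class of $P$.

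With the dictionary in place the conclusion will follow at once from Proposition \ref{gpsg}. The involution $\sigma'$ on $Y(6)$ sends $P\mapsto 1-P$ and fixes every $q_j$; since it preserves the gauge $q_j=E_{jj}$ it acts on ${\cal W}$ simply by $P\mapsto 1-P$, and indeed $1-P$ is again a rank-$3$ idempotent with $1-\frac12=\frac12$ on the diagonal, so it is \emph{a priori} eligible and only its decomposability is in question. For a generic $P\in{\cal W}$ I would write $P=g(\bar x)$ with $\bar x$ in the open locus where $\pi$ is defined; then Proposition \ref{gpsg} gives $1-P=\sigma'(g(\bar x))=g(\pi\bar x)$, so $1-P$ lies in the image of $g$. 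By the dictionary this says precisely that $1-P$ admits a decomposition into three rank-$1$ idempotents each with $\frac16$ on the diagonal, that is, $1-P\in{\cal W}$.

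Finally I would organize the genericity. The variety $X(3,6)/S_3$ is irreducible by Theorem \ref{irrx}, so ${\cal W}$, being its image under $g$, is irreducible; the involution $\pi$ is defined on a non-empty Zariski open subset by Propositions \ref{xyxsigma} and \ref{zariskyiso}; and $g$ is birational onto its image by Theorem \ref{birx}, so over a non-empty open subset of ${\cal W}$ the decomposition $\{p_1,p_2,p_3\}$ is recovered uniquely from $P$ and the point $\bar x$ is well defined. Intersecting these open conditions, the argument applies on a dense open subset of ${\cal W}$, which is the meaning of ``almost all $P$''. The only genuinely delicate point, and the one I expect to be the main obstacle, is the dictionary itself: one must verify that fixing $q_j=E_{jj}$ captures both diagonal conditions without loss, that the residual diagonal gauge torus is harmless because it preserves diagonals and idempotents and commutes with $P\mapsto 1-P$, and that the birationality of $g$ lets the decomposition of $1-P$ produced on the moduli side descend to an honest decomposition of the matrix $1-P$. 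Everything past that is a transcription of Proposition \ref{gpsg}.
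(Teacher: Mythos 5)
Your proposal is correct and matches the paper's own (implicit) argument: the paper offers no separate proof but introduces this Proposition explicitly as a linear-algebra reformulation of Proposition \ref{gpsg}, and your write-up simply makes that dictionary (gauge-fixing $q_j=E_{jj}$, reading the diagonal conditions as $q_jPq_j=\tfrac12 q_j$ and $\Tr p_iq_j=\tfrac16$, then applying $g\pi=\sigma'g$ on the open locus where $\pi$ is defined) precise. The only detail worth adding is that the pairwise orthogonality $p_ip_j=0$ needed to get an honest $B_{3,6}$-representation is automatic in characteristic zero for rank-one idempotents summing to a rank-three idempotent.
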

Chances are that this statement is true for all $P\in {\cal W}$.

\section{Mutually unbiased bases}

\subsection{Mutually unbiased bases and system of projectors}
The terminology of unbiased bases first appeared in physics.

Let $V$ be an $n$ dimensional complex vector space with a fixed
Hermitian metric $\langle \ ,\ \rangle $. Two orthonormal Hermitian
bases $\{e_i\}$ and $\{f_j\}$ in $V$ are {\em mutually
unbiased} if, for all $(i,j)$,
\begin{equation}\label{ef}
|\langle e_i, f_j \rangle|^2 =\frac 1n.
\end{equation}

There are two types of obvious transformations acting on the set of mutually unbiased bases. First, one can independently change the phase of all vectors in both bases:
$$
e_j\mapsto {\rm exp}(\sqrt{-1}\alpha_j )e_j,
$$
$$
f_j\mapsto {\rm exp}(\sqrt{-1}\beta_j )f_j.
$$
Second, on can transform all bases by a simultaneous linear transformation from $GL(n, {\mathbb C})$.

Let $\{p_i\}$ be the orthogonal (i.e. $p_ip_j=0$, for $i\ne j$) system of minimal projectors in $V$ related to base $\{e_i\}$, and $\{q_j\}$ the system of minimal projectors related to base $\{f_j\}$. Since both bases are orthonormal, all projectors are Hermitian, i.e. satisfy $p_j^{\dag}=p_j$ and $q_j^{\dag}=q_j$. Moreover, the condition that the bases are mutually unbiased is equivalent to:
$$
{\rm Tr}p_iq_j=\frac 1n,
$$
for all $(i,j)$. The converse is also true: two orthogonal systems of Hermitian projectors satisfying the above equation uniquely define a mutually unbiased pair of bases up to the first type of transformations, i.e. up to changing the phases of basic vectors.

It follows from section \ref{cartan} that a pair of mutually unbiased bases defines a pair of orthogonal Cartan subalgebras in Lie algebra $sl(n,{\mathbb C})$. The requirement that projectors are Hermitian means that the pair of Cartan subalgebras is special. We will see in the next subsection that they parameterize a real submanifold in the moduli space of all pairs of Cartan subalgebras.

\subsection{Moduli of mutually unbiased bases as a 'positive' real form of moduli of orthogonal pairs}
Let ${\bar {\cal X}}$ be the (singular) algebraic variety over $\mathbb C$ that parameterizes all pairs of orthogonal Cartan subalgebras in Lie algebra $sl(V)$, $V\simeq {\mathbb  C}^n$.
Since this is identified with the variety ${\rm Rep}_nB_{n,n}$, it is an affine variety.
Group $GL(V)$ acts on ${\bar {\cal X}}$, and the GIT quotient ${\bar {\cal M}}={\bar {\cal X}}/GL(V)$ is the moduli space of orthogonal pairs in $V$. As this is a GIT factor of an affine variety, it is affine too.

As we know, an orthogonal pair is uniquely defined by a pair of orthogonal systems of minimal projectors, where any pair of projectors from different systems are algebraically unbiased. For brevity, we will call such a pair of systems of projectors by {\em configuration}. A configuration is defined by an $n$-dimensional representation of algebra $B_{n,n}$, which is known to be always irreducible (cf. \cite{Iva}).

We reduce ${\bar {\cal X}}$ to its open subvariety ${\cal X}$ of smooth points, and we denote ${\cal M}={\cal X}/GL(V)$. Let us consider the real subvariety ${\cal X}_{\mathbb R}$ in $\cal X$ which is the locus of points that correspond to algebraically unbiased pair of orthogonal systems of Hermitian projectors. The unitary group $U(n)$ acts on ${\cal X}_{\mathbb R}$ and the quotient ${\cal M}_{\mathbb R}={\cal X}_{\mathbb R}/U(n)$ is the moduli of mutually unbiased bases.

Consider the involution that acts on ${\cal X}$ by Hermitian conjugation of all projectors:
$$
p\mapsto p^{\dag}.
$$
Clearly the involution is anti-holomorphic, and ${\cal X}_{\mathbb R}$ is the locus of stable points of the involution.
It is easy to check that the involution descends to an involution $\theta$ on $\cal M$ and that ${\cal M}_{\mathbb R}$ is embedded into the stable locus ${\cal M}^{\theta}$ of the involution on $\cal M$ . We will show that ${\cal M}_{\mathbb R}$ is an open subset in ${\cal M}^{\theta}$.


Let $\mathbb H$ be the set of hermitian operators in $V$, and ${\mathbb H}^{\times}$ be the open subset of invertible Hermitian operators.
Define ${\cal Y}\subset {{\mathbb H}^{\times}}\times {\cal X}$ by
$$
{\cal Y}=\{(g, \{p_i, q_j\})\in {{\mathbb H}^{\times}}\times {\cal X}|\ \  p_i^{\dag}=g^{-1}p_ig, q_i^{\dag}=g^{-1}q_ig\}
$$
Let ${\mathbb H}^{\times}_{\pm}\subset {\mathbb H}^{\times}$ be the open subset of invertible Hermitian matrices which are either positive or negative.
Define ${\cal Y}_{\pm}\subset {\cal Y}$ the open subset of those $(g, \{p_i, q_j\})$ for which $g\in {\mathbb H}^{\times}_{\pm}$.

We consider the map $\phi :{\cal Y} \to \cal X$ given by the projection to the second component of ${\mathbb H}^{\times}\times \cal X$ and similar map $\phi_{\pm} :{\cal Y}_{\pm} \to \cal X$.

Denote by ${\mathbb R}^{\times}$ the group of non-zero real numbers.
Consider group $G={\mathbb R}^{\times}\times PGL(n, {\mathbb C})$ and its action on ${{\mathbb H}^{\times}}\times {\cal X}$ by:
$$
(\alpha , h)(g, \{p_i,q_j\})=(\alpha hgh^{\dag}, \{hp_ih^{-1}, hq_jh^{-1}\}).
$$
It is easy to check that ${\cal Y}$ and ${\cal Y}_{\pm}$ are preserved by this action.

\begin{Proposition} ${\cal Y}$ is a principal homogeneous $G$-bundle over ${\cal M}^{\theta}$.
Similarly, ${\cal Y}_{\pm}$ is a principal homogeneous $G$-bundle over ${\cal M}_{\mathbb R}$.
\end{Proposition}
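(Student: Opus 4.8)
The plan is to exhibit the composite $\Phi\colon {\cal Y}\xrightarrow{\phi}{\cal X}\to {\cal M}$ as the bundle projection and to check that $G$ acts freely and transitively along its fibres, so that ${\cal Y}/G={\cal M}^{\theta}$. First I would verify that $\Phi$ lands in ${\cal M}^{\theta}$: the defining relation $p_i^{\dag}=g^{-1}p_ig$ (and the same for $q_j$) says that the Hermitian-conjugate configuration $\{p_i^{\dag},q_j^{\dag}\}$ is $GL(V)$-conjugate to $\{p_i,q_j\}$, so the class $[\{p_i,q_j\}]$ is fixed by $\theta$. That $\Phi$ is $G$-invariant is immediate, since the $h\in PGL(n,\mathbb{C})$-part only conjugates the configuration and the $\alpha\in\mathbb{R}^{\times}$-part does not touch it. Surjectivity onto ${\cal M}^{\theta}$ is where irreducibility of the $n$-dimensional representations of $B_{n,n}$ (cf. \cite{Iva}) enters: for $[c]\in {\cal M}^{\theta}$ the configurations $c$ and $c^{\dag}$ are $GL(V)$-conjugate, so by Schur's lemma the space of intertwiners is one-dimensional; applying $\dag$ to any intertwiner produces another, hence the intertwiner $g$ can be normalised to be Hermitian, giving a point $(g,c)\in{\cal Y}$ over $[c]$.

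Second, I would establish freeness and fibrewise transitivity. If $(\alpha,h)$ fixes $(g,c)$, then $h$ commutes with all $p_i,q_j$; by irreducibility and Schur $h$ is scalar, hence trivial in $PGL(n,\mathbb{C})$, and then $\alpha g=g$ forces $\alpha=1$, so the action is free. For transitivity on a fibre, two points $(g_1,c_1)$ and $(g_2,c_2)$ with $[c_1]=[c_2]$ can be brought to a common configuration by the $PGL$-part; the two remaining Hermitian intertwiners of that configuration differ by $\lambda\in\mathbb{C}^{\times}$ by Schur, and Hermiticity of both forces $\lambda\in\mathbb{R}^{\times}$, which is realised by the $\mathbb{R}^{\times}$-part. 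Thus each fibre of $\Phi$ is a single $G$-orbit.

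Third, for local triviality I would factor $\Phi$ as ${\cal Y}\to {\cal X}^{\mathrm{pre}}\to {\cal M}^{\theta}$, where ${\cal X}^{\mathrm{pre}}=\phi({\cal Y})$ is exactly the preimage of ${\cal M}^{\theta}$ in ${\cal X}$. Since we have passed to the smooth locus ${\cal X}$ and all representations are irreducible, $PGL(n,\mathbb{C})$ acts freely and properly, so ${\cal X}^{\mathrm{pre}}\to {\cal M}^{\theta}$ is a principal $PGL(n,\mathbb{C})$-bundle; and ${\cal Y}\to {\cal X}^{\mathrm{pre}}$ is a principal $\mathbb{R}^{\times}$-bundle, since the one-dimensional space of intertwiners varies algebraically in $c$ and defines a line bundle, inside which the Hermitian intertwiners form an $\mathbb{R}^{\times}$-subtorsor trivialised by a local choice of Hermitian intertwiner. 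Combining the two torsor structures yields the $G=\mathbb{R}^{\times}\times PGL(n,\mathbb{C})$-bundle structure of ${\cal Y}$ over ${\cal M}^{\theta}$.

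Finally, for ${\cal Y}_{\pm}$ over ${\cal M}_{\mathbb R}$ the only new ingredient is the identification of the image: $(g,c)\in{\cal Y}$ has $g$ definite if and only if $[c]\in{\cal M}_{\mathbb R}$. If $g\in\mathbb{H}^{\times}_{+}$, then $g^{-1}=h^{\dag}h$ with $h=g^{-1/2}$ Hermitian, and $hp_ih^{-1}=g^{-1/2}p_ig^{1/2}$ is Hermitian, so $c$ is conjugate to a configuration of Hermitian projectors, i.e. $[c]\in{\cal M}_{\mathbb R}$; the negative-definite case follows by replacing $g$ with $-g$. Conversely a Hermitian configuration has intertwiner $g=1$, and conjugating by $h$ replaces it by $h^{\dag}h\in\mathbb{H}^{\times}_{+}$. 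This is exactly where Sylvester's criterion for positive Hermitian matrices governs the strict inequalities cutting out ${\cal M}_{\mathbb R}$ inside ${\cal M}^{\theta}$. The remaining torsor axioms are verified as above, the $\mathbb{R}^{\times}$-part now interchanging the positive- and negative-definite sheets of $\mathbb{H}^{\times}_{\pm}$, which is why both signs must be included for $G$-invariance. \emph{The main obstacle} I anticipate is the local triviality of the third step: making rigorous the algebraic (or real-analytic) local choice of Hermitian intertwiner and verifying that the two torsor structures are genuinely compatible, rather than merely exhibiting free transitive fibrewise actions.
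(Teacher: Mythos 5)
Your proposal is correct and follows essentially the same route as the paper: Schur's lemma gives freeness and identifies the $\mathbb{R}^{\times}$-orbits with the fibres of ${\cal Y}\to{\cal X}$, the intertwiner over a $\theta$-fixed class is normalised to be Hermitian via $g^{\dag}=\lambda g$ with $|\lambda|=1$, and the definite case is handled by writing $g=v^{\dag}v$ (your $h=g^{-1/2}$) in one direction and $h^{\dag}h>0$ in the other. The local-triviality issue you flag as the main obstacle is in fact not addressed in the paper either, which contents itself with the free fibrewise-transitive action and the identification ${\cal Y}/G={\cal M}^{\theta}$.
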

\begin{proof} Let us check that the orbits of the action by ${\mathbb R}^{\times}$ are fibers of the map ${\cal Y}\to {\cal X}$. If $(g_1, \{p_i, q_j\})$ and $(g_2, \{p_i, q_j\})$ are in the fiber of ${\cal Y}\to {\cal X}$, then $(g_1)^{-1}g_2$ lies in the stabilizers of all projectors in configuration. Since we consider irreducible representations of $B_{n,n}$, we have by Schur lemma: $(g_1)^{-1}g_2=\lambda \cdot 1$. Therefore,
$$
g_2=\lambda g_1,
$$
where $\lambda \ne 0$, as $g_2$ is invertible.
Since $g_1$ and $g_2$ are Hermitian, applying the Hermitian conjugation gives:
$$
g_2={\bar \lambda }g_1.
$$
Hence, $\lambda ={\bar \lambda }$, i.e. $\lambda \in {\mathbb R}^{\times}$.

As it was already mentioned, any configuration is given by an irreducible representation of $B_{n,n}$.
Therefore, the action of $PGL(n, {\mathbb C})$ on ${\cal X}$ is free, because the stabilizer of any configuration is a scalar matrix by Schur lemma. It follows, that the action of $G$ on $\cal Y$ is free.

Take a point $m\in \cal M^{\theta}$. A point in $\cal X$ over it is presented by a configuration of projectors $ \{p_i, q_j\}$. Since $m$ is stable under involution $\sigma$ on the quotient space $\cal M$, there exists $g\in GL(n, \mathbb C)$ such that
$$
p^{\dag}=g^{-1}pg,
$$
for every projector $p$ from the configuration.
If we conjugate this equation, we get:
$$
p=g^{\dag}p^{\dag}(g^{\dag})^{-1}.
$$
Together, these equations implies that $g^{\dag}g^{-1}$ stabilizes all projectors $p$ involved.  It follows from Schur lemma that $g^{\dag}g^{-1}=\lambda \cdot 1$, for some nonzero multiplier $\lambda\in {\mathbb C}$.
Hence
$$
g^{\dag}=\lambda g.
$$
By taking Hermitian dual, we have:
$$
{\bar \lambda} g^{\dag}=g,
$$
which, when combined with the previous relation, implies:
$$
|\lambda |^2=1.
$$
It easy to see that we can replace $g$ by $\alpha g$, for some $\alpha \in {\mathbb C}$, and get $g^{\dag }=g$. The inverse inclusion $\phi ({\cal Y})\subset \pi^{-1}(\cal M^{\theta})$ is obvious. This proves that
${\cal Y}/G={\cal M}^{\theta}$.

Now let us check that $\phi ({\cal Y}_{\pm})\subset \pi^{-1}(\cal M_{\mathbb R})$. Take a point $(g, \{p_i, q_j\})\in {\cal Y}_{\pm}$. We can assume that $g>0$, because changing the sign of $G$ does not change the conjugation by it. For positive non-degenerated $g$, it is known to exist a decomposition:
$$
g=v^{\dag}v,
$$
for some invertible operator $v$. Since, for all projectors $p$ in the configuration, we have:
$$
p^{\dag }=g^{-1}pg=v^{-1}(v^{\dag})^{-1}pv^{\dag}v,
$$
it follows that $(v^{\dag})^{-1}pv^{\dag}$ is self-adjoint. Hence, we can conjugate our configuration to a self-adjoint one.

Conversely, take a point $m\in \cal M_{\mathbb R}$. By definition, there exists a point in the $\pi$-fiber of it such that all projectors from its configuration are Hermitian. Let us take another point in the same fiber. Then every projector $p$ from its configuration is conjugate to the corresponding Hermitian projector $r$:
$$
p=h^{-1}rh,
$$
where $h\in GL(n, {\mathbb C})$ is the same for all projectors $p$ of the configuration. Since $r^{\dag}=r$, we have
$$
p^{\dag}=h^{\dag}r^{\dag}(h^{\dag})^{-1}=h^{\dag}r(h^{\dag})^{-1}=h^{\dag}hph^{-1}(h^{\dag})^{-1}.
$$
Since $h^{\dag}h$ is positive, we have $\pi^{-1}(\cal M_{\mathbb R})\subset \phi ({\cal Y}_{\pm})$.
\end{proof}

\begin{Corollary}\label{minequalities}

Subset ${\cal M}_{\mathbb R}\subset {\cal M}^{\theta}$ is open and is defined by a system of strict real polynomial inequalities.
\end{Corollary}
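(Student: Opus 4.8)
The plan is to derive both assertions — openness, and the cutting out by strict inequalities — from the two principal $G$-bundle structures just established, with Sylvester's criterion doing the analytic work fiberwise on the total space ${\cal Y}$.

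First I would establish openness purely formally. By construction ${\cal Y}_{\pm}$ is the preimage of the open subset ${\mathbb H}^{\times}_{\pm}\subset {\mathbb H}^{\times}$ under the regular (hence continuous) projection ${\cal Y}\to {\mathbb H}^{\times}$, so ${\cal Y}_{\pm}$ is open in ${\cal Y}$, and it is $G$-invariant as was already observed. Since, by the preceding Proposition, the quotient map $\pi\colon {\cal Y}\to {\cal M}^{\theta}$ is a principal $G$-bundle, its fibers are exactly the $G$-orbits; thus the $G$-invariant set ${\cal Y}_{\pm}$ is saturated with respect to $\pi$. A saturated open set has open image under a quotient map, and the companion bundle statement identifies $\pi({\cal Y}_{\pm})$ with ${\cal M}_{\mathbb R}$. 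Hence ${\cal M}_{\mathbb R}$ is open in ${\cal M}^{\theta}$.

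For the inequalities I would use that ${\mathbb H}^{\times}_{\pm}$ is precisely the locus of \emph{definite} Hermitian matrices, which by Sylvester's criterion is cut out by strict real polynomial inequalities: one has $g>0$ exactly when all leading principal minors $\Delta_1(g),\dots,\Delta_n(g)$ are positive, and $g<0$ exactly when $(-1)^k\Delta_k(g)>0$ for all $k$, each $\Delta_k$ being a real polynomial in the real and imaginary parts of the entries of $g$. Thus on ${\cal Y}$ the subset ${\cal Y}_{\pm}$ is defined by a disjunction of two systems of strict polynomial inequalities. The remaining, and main, point is to transport this description from ${\cal Y}$ down to ${\cal M}^{\theta}$, the difficulty being that $g$ is a coordinate on the fiber rather than a function of the base point. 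I would resolve this by noting that the property ``$g$ is definite'' is $G$-invariant: under $g\mapsto hgh^{\dagger}$ the inertia is preserved by Sylvester's law of inertia, and under $g\mapsto \alpha g$ with $\alpha\in{\mathbb R}^{\times}$ definiteness persists (the two signs being interchanged but both admitted in ${\mathbb H}^{\times}_{\pm}$). Hence the definiteness condition descends to a well-defined condition on ${\cal M}^{\theta}$ whose locus is exactly ${\cal M}_{\mathbb R}$. To realize it by strict polynomial inequalities on the base, I would trivialize the principal bundle over a Zariski (or \'etale) neighborhood and choose a regular local section $m\mapsto g(m)$ — equivalently, reconstruct the intertwiner $g$ as an explicit polynomial expression in the configuration $\{p_i,q_j\}$ via irreducibility and Schur's lemma, normalized by a single scalar condition to kill the residual ${\mathbb R}^{\times}$-scaling — and pull back the Sylvester inequalities $\Delta_k(g(m))$. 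Because the underlying condition is $G$-invariant, these local descriptions agree on overlaps and assemble into the desired global system of strict real polynomial inequalities defining ${\cal M}_{\mathbb R}$ inside ${\cal M}^{\theta}$.

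The hard part, as I see it, is exactly this descent and normalization step: producing the intertwiner $g$ as a genuine regular function of the base point (not merely a fiber coordinate) and fixing the leftover real scaling so that the minors $\Delta_k$ become well-defined functions on ${\cal M}^{\theta}$. Once a regular local section is available, the conclusion follows formally from Sylvester's criterion together with the openness established above; I expect the bulk of the effort to lie in checking that such a section exists (local triviality of the principal bundle) and that the resulting inequalities are, up to the invariant definiteness condition, independent of the choices made.
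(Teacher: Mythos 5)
Your proposal follows essentially the same route as the paper: Sylvester's criterion cuts out ${\cal Y}_{\pm}$ inside ${\cal Y}$ by strict real polynomial inequalities, and these descend along the free $G$-action to give the inequalities on ${\cal M}^{\theta}$, with openness coming from the fact that ${\cal Y}_{\pm}$ is a $G$-saturated open set whose image under the quotient map is ${\cal M}_{\mathbb R}$. You are in fact more explicit than the paper about the one nontrivial point --- realizing the intertwiner $g$ as a regular function of the base via a local section and a normalization of the residual ${\mathbb R}^{\times}$-scaling, so that the descended conditions are genuinely polynomial on the quotient --- whereas the paper's proof simply asserts that the inequalities ``descend.''
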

\begin{proof} According to Sylvester theorem, positive Hermitian matrices are given by a system of $n$ strict polynomial inequalities with real (even integer!) coefficients. Hence the open subset ${\cal Y}_{\pm}\subset {\cal Y}$ is defined by strict polynomial inequalities too. Since ${\cal Y}_{\pm}$ is invariant with respect to the free $G$ action, the inequalities descend to strict polynomial inequalities on ${\cal M}^{\theta}$.
\end{proof}

\subsection{A 4-dimensional family of mutually unbiased bases}

Theorem \ref{compon} together with Corollary \ref{minequalities} imply the existence of a 4 dimensional family of mutually unbiased bases in 6-dimensional complex space.

\begin{Theorem}
There exists a family of real dimension 4 of mutually unbiased bases in ${\mathbb C}^6$.
\end{Theorem}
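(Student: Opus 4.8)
The plan is to assemble the final theorem from the two main ingredients already established: Theorem \ref{compon}, which produces a $4$-dimensional irreducible component $Z$ of $X(6,6)$ containing the point $x_0$, and Corollary \ref{minequalities}, which characterizes ${\cal M}_{\mathbb R}$ as an open subset of ${\cal M}^{\theta}$ cut out by strict polynomial inequalities. First I would recall that $X(6,6)$ is the moduli space ${\cal M}$ of orthogonal pairs in $sl(6,{\mathbb C})$, so the component $Z$ furnishes a genuinely $4$-complex-dimensional family inside ${\cal M}$. The point $x_0$ corresponds to the standard pair, realized via the Hadamard-type transition matrix $A$ of (\ref{matrA}), whose entries $\frac{1}{\sqrt n}\epsilon^{(i-1)(j-1)}$ give projectors that are manifestly Hermitian. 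Hence $x_0$ lies on ${\cal X}_{\mathbb R}$, so its class lies in ${\cal M}_{\mathbb R}\subset {\cal M}^{\theta}$.

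Next I would locate the interaction between the complex component $Z$ and the real locus. The anti-holomorphic involution $\theta$ acts on ${\cal M}$, and since $x_0$ is a $\theta$-fixed point lying on $Z$, I would argue that $\theta$ preserves the component through $x_0$ (an irreducible component containing a fixed point of an anti-holomorphic involution is carried by $\theta$ to an irreducible component through the same point, and by the uniqueness coming from smoothness of $x_0$ established in Theorem \ref{compon}, that component is $Z$ itself). Therefore $Z^{\theta}$, the real locus of $\theta$ restricted to $Z$, is a real-analytic subset of $Z$ passing through $x_0$, and near a smooth fixed point its real dimension equals the complex dimension of $Z$, namely $4$. This identifies a real $4$-dimensional piece of ${\cal M}^{\theta}$ sitting inside $Z$.

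Finally I would intersect this with the openness statement. By Corollary \ref{minequalities}, ${\cal M}_{\mathbb R}$ is open in ${\cal M}^{\theta}$ and is defined by strict polynomial inequalities; since $x_0\in {\cal M}_{\mathbb R}$, the inequalities hold strictly at $x_0$ and hence on a whole open neighborhood of $x_0$ in ${\cal M}^{\theta}$. Restricting to the $4$-dimensional real locus $Z^{\theta}$ constructed above, this open neighborhood meets it in a real $4$-dimensional family of mutually unbiased bases containing the standard pair, which is exactly the assertion. The main obstacle I anticipate is the dimension-counting step: one must verify that the real locus of the anti-holomorphic involution near the smooth point $x_0$ genuinely has real dimension equal to the complex dimension $4$, rather than collapsing to something smaller. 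This requires checking that $\theta$ acts on the $4$-dimensional tangent space $T_{x_0}$ as a conjugation whose fixed real subspace is full-dimensional over ${\mathbb R}$, which in turn follows from the reality of the standard configuration; making this rigorous, together with the claim that $\theta$ stabilizes $Z$, is where the real work lies, while the passage through the open inequalities is then routine.
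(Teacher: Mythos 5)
Your proposal is correct and follows essentially the same route as the paper: combine Theorem \ref{compon} (a $4$-dimensional component through $x_0$), the standard fact that the fixed locus of an anti-holomorphic involution on the smooth part of a component is a real submanifold of real dimension equal to the complex dimension, the unitarity of the matrix $A$ in (\ref{matrA}) to place $x_0$ in ${\cal M}_{\mathbb R}$, and the openness from Corollary \ref{minequalities}. The ``main obstacle'' you flag is in fact automatic --- the differential of an anti-holomorphic involution at a fixed point is an anti-linear involution of the tangent space, i.e.\ a real structure, whose fixed subspace always has full real dimension --- so no extra verification is needed there.
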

\begin{proof}
We have an anti-holomorphic involution $\theta$ on the moduli space $X(6,6)$ of 6-dimensional representations of $B_{n,n}$. Let us restrict to the locus $\cal M$ of smooth points in all irreducible  components of $X(6,6)$ as above. The locus of stable points of the involution on each component is a smooth real submanifold of real dimension equal to the complex dimension of the component.
By theorem \ref{compon}, we have a 4-dimensional irreducible component in $X(6,6)$. Hence, we need simply to check that the stable locus of $\theta$ is not empty on the smooth part of the component.





Consider the point $x_0$ constructed in Lemma{\ref{x0}}. According to Theorem \ref{compon} it is a smooth point on a 4-dimensional component of $X(6,6)$.
Since formula (\ref{matrA}) for the transition matrix $A$ from the bases $\{p_i\}$ to the basis $\{q_i\}$  is a unitary matrix,
point $x_0$ is an element of ${\cal M}_{\mathbb R}$.
\end{proof}

{\bf Remark.} Since the transformation matrix from one mutually unbiased bases to the other one is known to be a complex Hadamard matrix, the above theorem implies existence of a 4 dimensional family of complex Hadamard matrices of size $6\times 6$.

\end{document}